\theoremstyle{plain}
\newtheorem{theorem}{Theorem}
\newtheorem{assertion}[theorem]{Assertion}
\newtheorem{proposition}[theorem]{Proposition}
\theoremstyle{definition}
\newtheorem{definition}[theorem]{Definition}
\theoremstyle{remark}
\newtheorem{remark}[theorem]{Remark}
\newtheorem{example}[theorem]{Example}
\numberwithin{equation}{section}
\numberwithin{theorem}{section}
\renewcommand{\mathfrak}[1]{{\textbf{\upshape #1}}}
\renewcommand{\mathbf}{\bm}
\renewcommand{\emph}[1]{\textrm{{\upshape #1}}}
\renewcommand{\mathit}[1]{\mathscr #1}
\numberwithin{equation}{section}
\numberwithin{theorem}{section}
\def\oldbibitem{} \let\oldbibitem=\bibitem
\def\bibitem{\stepcounter{citnum}\oldbibitem}
\renewcommand*{\backref}[1]{}
\renewcommand*{\backrefalt}[4]{[{\tiny%
    \ifcase #1 \textsl{Not cited}%
          \or \textsl{Cited on page}~\textcolor{BrickRed}{#2}%
          \else \textsl{Cited on pages}~\textcolor{BrickRed}{#2}%
    \fi%
    }]}
\author{\small\scshape S\lowercase{teven} D\lowercase{uplij}}
\address{% \small \scshape
Center for Information Technology (WWU IT),
Universit\"at M\"unster,
R\"ontgenstrasse 7-13\\
D-48149 M\"unster,
Germany}
\email{\small \sf douplii@uni-muenster.de;
sduplij@gmail.com;
https://ivv5hpp.uni-muenster.de/u/douplii}
\title{\large\bfseries\scshape
H\lowercase{igher braid groups and regular semigroups
from polyadic-binary correspondence
}}
\date{\textit{of start} January 8, 2021. \textit{Date}:
\textit{of completion} February 7, 2021.
\newline
\mbox{}\hskip 1.16em
\textit{Total}:
27
references%,
}
\renewcommand{\refname}{\textsc{References}}
\let\origsection\section
\renewcommand{\section}[1]{\sectionmark{#1}\origsection{#1}}
\let\origsubsection\subsection
\renewcommand{\subsection}[1]{\subsectionmark{#1}\origsubsection{#1}}
\renewenvironment{thebibliography}[1]{%
  \@xp\origsection\@xp*\@xp{\refname}%
  \normalfont\footnotesize\labelsep .9em\relax
  \renewcommand\theenumiv{\arabic{enumiv}}\let\p@enumiv\@empty
  \vspace*{-5pt}% NEW
  \list{\@biblabel{\theenumiv}}{\settowidth\labelwidth{\@biblabel{#1}}%
    \leftmargin\labelwidth \advance\leftmargin\labelsep
    \usecounter{enumiv}}%
  \sloppy \clubpenalty\@M \widowpenalty\clubpenalty
  \sfcode`\.=\@m
}{%
  \def\@noitemerr{\@latex@warning{Empty `thebibliography' environment}}%
  \endlist
}
\subjclass[2010]{16T25, 17A42, 20B30, 20F36, 20M17, 20N15}
\keywords{regular semigroup, braid group, generator, relation, presentation, Coxeter group, symmetric group, polyadic matrix group, querelement, idempotence, finite order element}
\begin{document}
\mbox{}

\begin{abstract}
\noindent In this note we first consider a ternary matrix group related to the
von Neumann regular semigroups and to the Artin braid group (in an algebraic
way). The product of a special kind of ternary matrices (idempotent and of
finite order) reproduces the regular semigroups and braid groups with their
binary multiplication of components. We then generalize the construction to
the higher arity case, which allows us to obtain some higher degree versions
(in our sense) of the regular semigroups and braid groups. The latter are
connected with the generalized polyadic braid equation and $R$-matrix
introduced by the author, which differ from any version of the well-known
tetrahedron equation and higher-dimensional analogs of the Yang-Baxter
equation, $n$-simplex equations. The higher degree (in our sense) Coxeter group
and symmetry groups are then defined, and it is shown that these are connected
only in the non-higher case.
\end{abstract}
\maketitle

\thispagestyle{empty}

\mbox{}
\tableofcontents

\newpage

\pagestyle{fancy}

\addtolength{\footskip}{15pt}

\renewcommand{\sectionmark}[1]{%
\markboth{
{ \scshape #1}}{}}

\renewcommand{\subsectionmark}[1]{%
\markright{
\mbox{\;}\\[5pt]
\textmd{#1}}{}}

\fancyhead{}
\fancyhead[EL,OR]{\leftmark}
\fancyhead[ER,OL]{\rightmark}
\fancyfoot[C]{\scshape -- \textcolor{BrickRed}{\thepage} --}
\fancyfoot[R]{}

\renewcommand\headrulewidth{0.5pt}
\fancypagestyle {plain1}{ %
\fancyhf{}
\renewcommand {\headrulewidth }{0pt}
\renewcommand {\footrulewidth }{0pt}
}

\fancypagestyle{plain}{ %
\fancyhf{}
\fancyhead[C]{\scshape S\lowercase{teven} D\lowercase{uplij} \hskip 0.7cm \MakeUppercase{Higher analogs of regular semigroups and braid groups from polyadic-binary correspondence}}
\fancyfoot[C]{\scshape - \thepage  -}
\renewcommand {\headrulewidth }{0pt}
\renewcommand {\footrulewidth }{0pt}
}

\fancypagestyle{fancyref}{ %
\fancyhf{} % remove everything
\fancyhead[C]{\scshape R\lowercase{eferences} }
\fancyfoot[C]{\scshape -- \textcolor{BrickRed}{\thepage} --}
\renewcommand {\headrulewidth }{0.5pt}
\renewcommand {\footrulewidth }{0pt}
}

\fancypagestyle{emptyf}{
\fancyhead{}
\fancyfoot[C]{\scshape -- \textcolor{BrickRed}{\thepage} --}
\renewcommand{\headrulewidth}{0pt}
}
\mbox{}
\vskip 1.5cm
\thispagestyle{emptyf}

\section{\textsc{Introduction}}

We begin by observing that the defining relations of the von Neumann regular
semigroups (e.g. \cite{grillet,howie,petrich3}) and the Artin braid group
\cite{kas/tur2008,kauffman} correspond to such properties of ternary matrices
(over the same set) as idempotence and the orders of elements (period). We
then generalize the correspondence thus introduced to the polyadic case and
thereby obtain higher degree (in our definition) analogs of the former. The
higher (degree) regular semigroups obtained in this way have appeared
previously in semisupermanifold theory \cite{duplij}, and higher regular
categories in TQFT \cite{dup/mar7}. The representations of the higher braid
relations in vector spaces coincide with the higher braid equation and
corresponding generalized $R$-matrix obtained in \cite{dup2018d}, as do the
ordinary braid group and the Yang-Baxter equation \cite{tur88}. The proposed
constructions use polyadic group methods and differ from the tetrahedron
equation \cite{zam81} and $n$-simplex equations \cite{hie} connected with
the braid group representations \cite{li/hu,hu97}, also from higher braid groups
of \cite{man/sch89}. Finally, we define higher degree (in our sense)
versions of the Coxeter group and the symmetric group and show that they are
connected in the classical (i.e. non-higher) case only.

\section{\textsc{Preliminaries}}

There is a general observation \cite{nik84}, that a block-matrix, forming a
semisimple $\left(  2,k\right)  $-ring (Artinian ring with binary addition and
$k$-ary multiplication) has the shape%
\begin{equation}
\mathsf{M}\left(  k-1\right)  \equiv\mathsf{M}^{\left(  \left(  k-1\right)
\times\left(  k-1\right)  \right)  }=\left(
\begin{array}
[c]{ccccc}%
0 & \mathsf{m}^{\left(  i_{1}\times i_{2}\right)  } & 0 & \ldots & 0\\
0 & 0 & \mathsf{m}^{\left(  i_{2}\times i_{3}\right)  } & \ldots & 0\\
0 & 0 & \vdots & \ddots & \vdots\\
\vdots & \vdots & 0 & \ddots & \mathsf{m}^{\left(  i_{k-2}\times
i_{k-1}\right)  }\\
\mathsf{m}^{\left(  i_{k-1}\times i_{1}\right)  } & 0 & 0 & \ldots & 0
\end{array}
\right)  . \label{m}%
\end{equation}
In other words, it is given by the cyclic shift $\left(  k-1\right)
\times\left(  k-1\right)  $ matrix, in which identities are replaced by blocks
of suitable sizes and with arbitrary entries.

The set $\left\{  \mathsf{M}\left(  k-1\right)  \right\}  $ is closed with
respect to the product of $k$ matrices, and we will therefore call them
$k$-ary matrices. They form a $k$-ary semigroup, and when the blocks are over
an associative binary ring, then total associativity follows from the
associativity of matrix multiplication.

Our proposal is to use single arbitrary elements (from rings with associative
multiplication) in place of the blocks $\mathsf{m}^{\left(  i\times j\right)
}$, supposing that the elements of the multiplicative part $G$ of the rings
form binary (semi)groups having some special properties. Then we investigate
the similar correspondence between the (multiplicative) properties of the
matrices $\mathsf{M}^{\left(  k-1\right)  }$, related to idempotence and
order, and the appearance of the relations in $G$ leading to regular
semigroups and braid groups, respectively. We call this connection a polyadic
matrix-binary (semi)group correspondence (or in short the
\textit{polyadic-binary correspondence}).

In the lowest -arity case $k=3$, the ternary case, the $2\times2$ matrices
$\mathsf{M}^{\left(  2\right)  }$ are anti-triangle. From $\left(
\mathsf{M}\left(  2\right)  \right)  ^{3}=\mathsf{M}\left(  2\right)  $ and
$\left(  \mathsf{M}\left(  2\right)  \right)  ^{3}\sim\mathsf{E}\left(
2\right)  $ (where $\mathsf{E}\left(  2\right)  $ is the ternary identity, see
below), we obtain the correspondences of the above conditions on
$\mathsf{M}\left(  2\right)  $ with the ordinary regular semigroups and braid
groups, respectively. In this way we extend the polyadic-binary correspondence
on -arities $k\geq4$ to get the higher relations%
\begin{equation}
\left(  \mathsf{M}\left(  k-1\right)  \right)  ^{k}=\left\{
\begin{array}
[c]{l}%
=\mathsf{M}\left(  k-1\right)  \ \ \ \text{corresponds to higher
}k\text{-degree regular semigroups},\\
=q\mathsf{E}\left(  k-1\right)  \ \ \ \text{corresponds to higher
}k\text{-degree braid groups},
\end{array}
\right.  \label{mk}%
\end{equation}
where $\mathsf{E}\left(  k-1\right)  $ is the $k$-ary identity (see below) and
$q$ is a fixed element of the braid group.

\section{\textsc{Ternary matrix group corresponding to the regular semigroup}}

Let $G_{free}=\left\{  \mathrm{G}\mid\mu_{2}^{g}\right\}  $ be a free
semigroup with the underlying set $\mathrm{G}=\left\{  g^{\left(  i\right)
}\right\}  $ and the binary multiplication. The anti-diagonal matrices over
$G_{free}$
\begin{equation}
M^{g}\left(  2\right)  =M^{\left(  2\times2\right)  }\left(  g^{\left(
1\right)  },g^{\left(  2\right)  }\right)  =\left(
\begin{array}
[c]{cc}%
0 & g^{\left(  1\right)  }\\
g^{\left(  2\right)  } & 0
\end{array}
\right)  ,\ \ \ g^{\left(  1\right)  },g^{\left(  2\right)  }\in G_{free}
\label{mg}%
\end{equation}
form a ternary semigroup $\mathcal{M}_{3}^{g}\equiv\mathcal{M}_{k=3}%
^{g}=\left\{  \mathrm{M}^{g}\left(  2\right)  \mid\mu_{3}^{g}\right\}  $,
where $\mathrm{M}^{g}\left(  2\right)  =\left\{  M^{g}\left(  2\right)
\right\}  $ is the set of ternary matrices (\ref{mg}) closed under the ternary
multiplication%
\begin{equation}
\mu_{3}^{g}\left[  M_{1}^{g}\left(  2\right)  ,M_{2}^{g}\left(  2\right)
,M_{3}^{g}\left(  2\right)  \right]  =M_{1}^{g}\left(  2\right)  M_{2}%
^{g}\left(  2\right)  M_{3}^{g}\left(  2\right)  ,\ \ \ \forall M_{1}%
^{g}\left(  2\right)  ,M_{2}^{g}\left(  2\right)  ,M_{3}^{g}\left(  2\right)
\in\mathcal{M}_{3}^{g}, \label{m3}%
\end{equation}
being the ordinary matrix product. Recall that an element $M^{g}\left(
2\right)  \in\mathcal{M}_{3}^{g}$ is idempotent, if%
\begin{equation}
\mu_{3}^{g}\left[  M^{g}\left(  2\right)  ,M^{g}\left(  2\right)
,M^{g}\left(  2\right)  \right]  =M^{g}\left(  2\right)  ,
\end{equation}
which in the matrix form (\ref{m3}) leads to%
\begin{equation}
\left(  M^{g}\left(  2\right)  \right)  ^{3}=M^{g}\left(  2\right)  .
\label{mg3}%
\end{equation}

We denote the set of idempotent ternary matrices by $\mathrm{M}%
_{\operatorname*{id}}^{g}\left(  2\right)  =\left\{  M_{\operatorname*{id}%
}^{g}\left(  2\right)  \right\}  $.

\begin{definition}
A ternary matrix semigroup in which every element is idempotent (\ref{mg3}) is
called an \textit{idempotent ternary semigroup}.
\end{definition}

Using (\ref{mg}) and (\ref{mg3}) the idempotence expressed in components gives
the regularity conditions%
\begin{align}
g^{\left(  1\right)  }g^{\left(  2\right)  }g^{\left(  1\right)  }  &
=g^{\left(  1\right)  },\label{g1}\\
g^{\left(  2\right)  }g^{\left(  1\right)  }g^{\left(  2\right)  }  &
=g^{\left(  2\right)  },\ \ \ \forall g^{\left(  1\right)  },g^{\left(
2\right)  }\in G_{free}. \label{g2}%
\end{align}

\begin{definition}
A binary semigroup $G_{free}$ in which any two elements are mutually regular
(\ref{g1})--(\ref{g2}) is called a \textit{regular semigroup} $G_{reg}$.
\end{definition}

\begin{proposition}
\label{prop-smg-ab}The set of idempotent ternary matrices (\ref{mg3}) form a
ternary semigroup \textit{ }$\mathcal{M}_{3,\operatorname*{id}}^{g}=\left\{
\mathrm{M}_{\operatorname*{id}}\left(  2\right)  \mid\mu_{3}\right\}  $, if
$G_{reg}$ is abelian.
\end{proposition}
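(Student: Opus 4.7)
The plan is to verify that $\mathrm{M}_{\operatorname*{id}}^{g}(2)$ is closed under the ternary product $\mu_{3}^{g}$; total associativity is then inherited from ordinary matrix associativity, as already observed before (\ref{m3}). Fix three idempotent matrices $M_{i}^{g}(2)$ ($i=1,2,3$) with entries $(a_{i},b_{i})\in G_{reg}\times G_{reg}$ satisfying the component regularity relations (\ref{g1})--(\ref{g2}) for each $i$. A direct block multiplication of the three anti-diagonal matrices (\ref{mg}) gives
\[
M_{1}^{g}(2)\,M_{2}^{g}(2)\,M_{3}^{g}(2)=\begin{pmatrix}0 & a_{1}b_{2}a_{3}\\ b_{1}a_{2}b_{3} & 0\end{pmatrix},
\]
which is again anti-diagonal and therefore automatically lies in $\mathrm{M}^{g}(2)$.

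Writing $u:=a_{1}b_{2}a_{3}$ and $v:=b_{1}a_{2}b_{3}$, the idempotence condition (\ref{mg3}) applied to this product reduces to the two scalar identities $uvu=u$ and $vuv=v$. I would use the abelian hypothesis first to freely permute the nine factors of $uvu$, obtaining $uvu=a_{1}^{2}a_{2}a_{3}^{2}b_{1}b_{2}^{2}b_{3}$. In abelian form, the mutual regularity $ghg=g$, which by the definition of $G_{reg}$ holds for \emph{any} pair of elements, reads $g^{2}h=g$. Grouping the factors as $(a_{1}^{2}a_{2})(a_{3}^{2}b_{1})(b_{2}^{2}b_{3})$ and applying $g^{2}h=g$ to the pairs $(a_{1},a_{2})$, $(a_{3},b_{1})$, $(b_{2},b_{3})$ collapses the expression to $a_{1}\,a_{3}\,b_{2}=a_{1}b_{2}a_{3}=u$. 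The identity $vuv=v$ follows by the mirror computation, swapping the roles of $a$ and $b$ throughout.

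The proof is then essentially complete; the only non-routine step is finding a correct pairing of the nine factors so that three applications of $g^{2}h=g$ return exactly the letters $a_{1},a_{3},b_{2}$ appearing in $u$ (and dually $b_{1},b_{3},a_{2}$ in $v$). The abelianness of $G_{reg}$ enters only at this combinatorial step, used to commute factors into matched $(g,g,h)$-triples on which the pairwise regularity acts directly; without it one would have to work with the nine-factor word as written, or alternatively invoke the regularity condition on the new elements $u,v\in G_{reg}$ themselves, shifting the burden of closure elsewhere. Once $uvu=u$ and $vuv=v$ are established, closure of $\mathrm{M}_{\operatorname*{id}}^{g}(2)$ under $\mu_{3}^{g}$ is immediate, and together with inherited ternary associativity this delivers the ternary semigroup structure asserted in the proposition.
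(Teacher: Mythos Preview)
Your argument is considerably more explicit than the paper's, which simply asserts that idempotence is preserved under $\mu_3$ only when the elements commute. The computation of the triple product and the reduction strategy via commutativity are the right moves.

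There is, however, a subtle inconsistency in how you deploy the regularity hypothesis. You apply $g^2h=g$ to the pairs $(a_1,a_2)$, $(a_3,b_1)$, $(b_2,b_3)$, justified by the paper's definition of $G_{reg}$ (any two elements mutually regular). But if that definition is taken at face value, then \emph{every} anti-diagonal matrix over $G_{reg}$ is already idempotent, so $\mathrm{M}_{\operatorname*{id}}^{g}(2)=\mathrm{M}^{g}(2)$ and closure is immediate without any abelian hypothesis or computation---indeed you note the shortcut ``invoke the regularity condition on $u,v$ themselves'' before setting it aside. The substantive content of the proposition, and the only place abelianness is genuinely needed, appears when one assumes merely the per-matrix relations $a_ib_ia_i=a_i$, $b_ia_ib_i=b_i$ coming from the idempotence of the three given matrices. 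Under that reading your chosen pairs are not known to satisfy regularity; the fix is to regroup as $(a_1^{2}b_1)(b_2^{2}a_2)(a_3^{2}b_3)=a_1\cdot b_2\cdot a_3=u$, using exactly the three relations supplied by $M_1,M_2,M_3$. With that small adjustment the argument goes through and the abelian hypothesis does real work.
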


\begin{proof}
It follows from (\ref{g1})--(\ref{g2}), that idempotence (and following from
it regularity) is preserved with respect to the ternary multiplication
(\ref{m3}), only when any $g^{\left(  1\right)  },g^{\left(  2\right)  }\in
G_{free}$ commute.
\end{proof}

\begin{definition}
We say that the set of idempotent ternary matrices $\mathrm{M}%
_{\operatorname*{id}}^{g}\left(  2\right)  $ (\ref{mg3}) is in
\textit{ternary-binary correspondence} with the regular (binary) semigroup
$G_{reg}$ and write this as%
\begin{equation}
\mathrm{M}_{\operatorname*{id}}^{g}\left(  2\right)  \Bumpeq G_{reg}\label{c}%
\end{equation}

\end{definition}

This means that such property of the ternary matrices as their idempotence
(\ref{mg3}) leads to the regularity conditions (\ref{g1})--(\ref{g2}) in the
correspondent binary group $G_{free}$.

\begin{remark}
The correspondence (\ref{c}) is not a homomorphism and not a bi-element
mapping \cite{bor/dud/dup3}, and also not a heteromorphism in sense of
\cite{dup2018a}, because we do not demand that the set of idempotent matrices
$M_{\operatorname*{id}}^{g}\left(  2\right)  $ form a ternary semigroup (which
is possible in commutative case of $G_{free}$ only, see \textbf{Proposition}
\ref{prop-smg-ab}).
\end{remark}

\section{\label{sec-smgk}\textsc{Polyadic matrix semigroup corresponding to
the higher regular semigroup}}

We next extend the ternary-binary correspondence (\ref{c}) to the $k$-ary
matrix case (\ref{m}) and thereby obtain higher $k$-regular binary
semigroups\footnote{We use the following notation:
\par
Round brackets: $\left(  k\right)  $ is size of matrix $k\times k$, also the
sequential number of a matrix element.
\par
Square brackets: $\left[  k\right]  $ is number of multipliers in the
regularity and braid conditions.
\par
Angle brackets: $\left\langle \ell\right\rangle _{k}$ is the polyadic power
(number of $k$-ary multiplications).
\par
{}}.

Let us introduce the $\left(  k-1\right)  \times\left(  k-1\right)  $ matrix
over a binary group $G_{free}$ of the form (\ref{m})%
\begin{equation}
M^{g}\left(  k-1\right)  \equiv M^{\left(  \left(  k-1\right)  \times\left(
k-1\right)  \right)  }\left(  g^{\left(  1\right)  },g^{\left(  2\right)
},\ldots,g^{\left(  k-1\right)  }\right)  =\left(
\begin{array}
[c]{ccccc}%
0 & g^{\left(  1\right)  } & 0 & \ldots & 0\\
0 & 0 & g^{\left(  1\right)  } & \ldots & 0\\
0 & 0 & \vdots & \ddots & \vdots\\
\vdots & \vdots & 0 & \ddots & g^{\left(  k-2\right)  }\\
g^{\left(  k-1\right)  } & 0 & 0 & \ldots & 0
\end{array}
\right)  , \label{mgk}%
\end{equation}
where $g^{\left(  i\right)  }\in G_{free}$.

\begin{definition}
The set of $k$-ary matrices $\mathrm{M}^{g}\left(  k-1\right)  $ (\ref{mgk})
over $G_{free}$ is a $k$\textit{-ary matrix semigroup} $\mathcal{M}_{k}%
^{g}=\left\{  \mathrm{M}^{g}\left(  k-1\right)  \mid\mu_{k}^{g}\right\}  $,
where the multiplication
\begin{align}
&  \mu_{k}^{g}\left[  M_{1}^{g}\left(  k-1\right)  ,M_{2}^{g}\left(
k-1\right)  ,\ldots,M_{k}^{g}\left(  k-1\right)  \right] \nonumber\\
&  =M_{1}^{g}\left(  k-1\right)  M_{2}^{g}\left(  k-1\right)  \ldots M_{k}%
^{g}\left(  k-1\right)  ,\ \ M_{i}^{g}\left(  k-1\right)  \in\mathcal{M}%
_{k}^{g} \label{mgm}%
\end{align}
is the ordinary product of $k$ matrices $M_{i}^{g}\left(  k-1\right)  \equiv
M^{\left(  \left(  k-1\right)  \times\left(  k-1\right)  \right)  }\left(
g_{i}^{\left(  1\right)  },g_{i}^{\left(  2\right)  },\ldots,g_{i}^{\left(
k-1\right)  }\right)  $, see (\ref{mgk}).
\end{definition}

Recall that the polyadic power $\ell$ of an element $M$ from a $k$-ary
semigroup $\mathcal{M}_{k}$ is defined by (e.g. \cite{pos})%
\begin{equation}
M^{\left\langle \ell\right\rangle _{k}}=\left(  \mu_{k}\right)  ^{\ell}\left[
\overset{\ell\left(  k-1\right)  +1}{\overbrace{M,\ldots,M}}\right]  ,
\label{ml}%
\end{equation}
such that $\ell$ coincides with the \textit{number of }$k$\textit{-ary
multiplications}. In the binary case $k=2$ the polyadic power is connected
with the ordinary power $p$ (\textit{number of elements} in the product) as
$p=\ell+1$, i.e. $M^{\left\langle \ell\right\rangle _{2}}=M^{\ell+1}=M^{p}$.
In the ternary case $k=3$ we have $\left\langle \ell\right\rangle _{3}%
=2\ell+1$, and so the l.h.s. of (\ref{mg3}) is of polyadic power $\ell=1$.

\begin{definition}
An element of a $k$-ary semigroup $M\in\mathcal{M}_{3}$ is called
\textit{idempotent}, if its first polyadic power coincides with itself%
\begin{equation}
M^{\left\langle 1\right\rangle _{k}}=M, \label{m1m}%
\end{equation}
and $\left\langle \ell\right\rangle $\textit{-idempotent}, if%
\begin{equation}
M^{\left\langle \ell\right\rangle _{k}}=M,\ \ M^{\left\langle \ell
-1\right\rangle _{k}}\neq M. \label{mlm}%
\end{equation}

\end{definition}

\begin{definition}
A $k$-ary semigroup$\ \mathcal{M}_{k}$ is called \textit{idempotent} ($\ell
$\textit{-idempotent}), if each of its elements $M\in\mathcal{M}_{k}$ is
idempotent ($\left\langle \ell\right\rangle $\textit{-idempotent}).
\end{definition}

\begin{assertion}
\label{asser-lreg}From $M^{\left\langle 1\right\rangle _{k}}=M$ it follows
that $M^{\left\langle \ell\right\rangle _{k}}=M$, but not vice-versa,
therefore all $\left\langle 1\right\rangle $-idempotent elements are
$\left\langle \ell\right\rangle $-idempotent, but an $\left\langle
\ell\right\rangle $-idempotent element need not be $\left\langle
1\right\rangle $-idempotent.
\end{assertion}

Therefore, the definition given in(\ref{mlm}) makes sense.

\begin{proposition}
If a $k$-ary matrix $M^{g}\left(  k-1\right)  \in\mathcal{M}_{k}^{g}$ is
idempotent (\ref{m1m}), then its elements satisfy the $\left(  k-1\right)  $
relations%
\begin{align}
g^{\left(  1\right)  }g^{\left(  2\right)  },\ldots g^{\left(  k-2\right)
}g^{\left(  k-1\right)  }g^{\left(  1\right)  }  &  =g^{\left(  1\right)
},\label{gg1}\\
g^{\left(  2\right)  }g^{\left(  3\right)  },\ldots g^{\left(  k-1\right)
}g^{\left(  1\right)  }g^{\left(  2\right)  }  &  =g^{\left(  2\right)
},\label{gg2}\\
&  \vdots\nonumber\\
g^{\left(  k-1\right)  }g^{\left(  1\right)  }g^{\left(  2\right)  },\ldots
g^{\left(  k-2\right)  }g^{\left(  k-1\right)  }  &  =g^{\left(  k-1\right)
},\ \ \ \forall g^{\left(  1\right)  },\ldots,g^{\left(  k-1\right)  }\in
G_{free}. \label{gg3}%
\end{align}

\end{proposition}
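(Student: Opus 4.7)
The plan is to exploit the cyclic-shift structure of the matrix $M^g(k-1)$ and show by direct calculation that raising it to the $k$th (matrix) power returns a matrix with the same nonzero pattern as $M^g(k-1)$ itself, whose entries are precisely the length-$k$ cyclic words appearing in \eqref{gg1}--\eqref{gg3}.

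First, I would introduce the observation that $M^g(k-1)$ has nonzero entries only in positions $(i,i+1)$ for $i=1,\dots,k-2$ and $(k-1,1)$, with values $g^{(1)},\dots,g^{(k-1)}$ respectively. Reading the column index modulo $k-1$ (with representatives $\{1,\dots,k-1\}$), this is a single super-diagonal with a wrap-around. The multiplication rule for such matrices is then clean: if $A$ has a single nonzero ``shifted diagonal'' at offset $a$ and $B$ at offset $b$, the product $AB$ has a single nonzero shifted diagonal at offset $a+b\pmod{k-1}$.

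Second, I would prove by induction on $n$ that
\begin{equation*}
\bigl(M^g(k-1)\bigr)^n_{i,j}=\begin{cases} g^{(i)}g^{(i+1)}\cdots g^{(i+n-1)}, & j\equiv i+n\pmod{k-1},\\ 0, & \text{otherwise},\end{cases}
\end{equation*}
where all superscript indices are reduced modulo $k-1$ with representatives $\{1,\dots,k-1\}$. The base case $n=1$ is the definition \eqref{mgk}, and the inductive step reduces to matching a single $g^{(i+n)}$ onto the right of the product from the previous step (because the shifted-diagonal structure forces a unique nonzero contribution in the row-column sum). This is the main computation, and the only delicate point is bookkeeping of the cyclic indexing at the wrap-around $(k-1,1)$; once that is stated once, the induction is routine.

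Third, I would specialize to $n=k$. Since $k\equiv 1\pmod{k-1}$, the power $(M^g(k-1))^k$ has exactly the same nonzero pattern as $M^g(k-1)$, with entries
\begin{equation*}
\bigl(M^g(k-1)\bigr)^k_{i,i+1}=g^{(i)}g^{(i+1)}\cdots g^{(i+k-1)},
\end{equation*}
that is, a cyclic word of length $k$ starting at $g^{(i)}$ and ending at $g^{(i)}$ (since $i+k-1\equiv i\pmod{k-1}$). Imposing the idempotence \eqref{m1m} in matrix form $(M^g(k-1))^k=M^g(k-1)$ and equating entries in position $(i,i+1)$ for $i=1,\dots,k-1$ yields exactly the $k-1$ equalities \eqref{gg1}--\eqref{gg3}, completing the proof.

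The main obstacle is purely notational rather than conceptual: the cyclic indexing on the superscripts of $g^{(\cdot)}$ must be handled consistently so that the wrap-around entry $g^{(k-1)}$ appears in the correct position of the word in each relation. Everything else is direct matrix multiplication using the shift-offset rule, so no hidden associativity or commutativity hypothesis on $G_{free}$ is needed for this proposition.
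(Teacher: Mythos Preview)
Your proposal is correct and follows essentially the same approach as the paper, which simply states that the relations follow directly from the matrix form \eqref{mgk}, the $k$-ary multiplication \eqref{mgm}, and the idempotence condition \eqref{m1m}. You have merely made the direct computation explicit via the shifted-diagonal induction, which is exactly the content hidden behind the paper's one-line proof.
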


\begin{proof}
This follows from (\ref{mgk}), (\ref{mgm}) and (\ref{m1m}).
\end{proof}

\begin{definition}
The relations (\ref{gg1})--(\ref{gg3}) are called (higher) $\left[  k\right]
$\textit{-regularity} (or \textit{higher }$k$\textit{-degree regularity}). The
case $k=3$ is the standard regularity ($\left[  3\right]  $-regularity in our
notation) (\ref{g1})--(\ref{g2}).
\end{definition}

\begin{proposition}
If a $k$-ary matrix $M^{g}\left(  k-1\right)  \in\mathcal{M}_{k}^{g}$ is
$\left\langle \ell\right\rangle $-idempotent (\ref{mlm}), then its elements
satisfy the following $\left(  k-1\right)  $ relations%
\begin{align}
\overset{\ell}{\overbrace{\left(  g^{\left(  1\right)  }g^{\left(  2\right)
}\ldots g^{\left(  k-2\right)  }g^{\left(  k-1\right)  }\right)  \ldots\left(
g^{\left(  1\right)  }g^{\left(  2\right)  },\ldots g^{\left(  k-2\right)
}g^{\left(  k-1\right)  }\right)  }}g^{\left(  1\right)  }  &  =g^{\left(
1\right)  },\label{ggg1}\\
\overset{\ell}{\overbrace{\left(  g^{\left(  2\right)  }g^{\left(  3\right)
},\ldots g^{\left(  k-2\right)  }g^{\left(  k-1\right)  }g^{\left(  1\right)
}\right)  \ldots\left(  g^{\left(  2\right)  }g^{\left(  3\right)  }\ldots
g^{\left(  k-2\right)  }g^{\left(  k-1\right)  }g^{\left(  1\right)  }\right)
}}g^{\left(  2\right)  }  &  =g^{\left(  2\right)  },\label{ggg2}\\
&  \vdots\nonumber\\
\overset{\ell}{\overbrace{\left(  g^{\left(  k-1\right)  }g^{\left(  1\right)
}g^{\left(  2\right)  }\ldots g^{\left(  k-3\right)  }g^{\left(  k-2\right)
}\right)  \ldots\left(  g^{\left(  k-1\right)  }g^{\left(  1\right)
}g^{\left(  2\right)  }\ldots g^{\left(  k-3\right)  }g^{\left(  k-2\right)
}\right)  }}g^{\left(  k-1\right)  }  &  =g^{\left(  k-1\right)
},\label{ggg3}\\
\ \ \ \forall g^{\left(  1\right)  },\ldots,g^{\left(  k-1\right)  }  &  \in
G_{free}.\nonumber
\end{align}

\end{proposition}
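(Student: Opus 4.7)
The plan is to reduce $\langle\ell\rangle$-idempotency to an ordinary matrix identity in $(k-1)\times(k-1)$ matrices, expand both sides entrywise, and read off the $(k-1)$ relations by matching the non-zero components. The strategy is strictly parallel to the way the regularity relations (\ref{gg1})--(\ref{gg3}) were extracted in the preceding proposition from (\ref{m1m}), but now using $\ell(k-1)+1$ matrix factors in place of $k$.

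First I would convert the polyadic power into an ordinary matrix product. By the definition (\ref{ml}), each application of the $k$-ary multiplication $\mu_k^g$ appends $k-1$ new factors, so that
\[
M^{\langle\ell\rangle_k}=\underbrace{M^g(k-1)\,M^g(k-1)\cdots M^g(k-1)}_{\ell(k-1)+1\text{ copies}},
\]
where the right-hand side is the ordinary binary matrix product. Hence the defining condition (\ref{mlm}) becomes simply $(M^g(k-1))^{\ell(k-1)+1}=M^g(k-1)$ in the binary sense.

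Next I would exploit the fact that $M^g(k-1)$ is a weighted cyclic-shift matrix of order $k-1$: its only non-zero entries live on the ``cyclic super-diagonal'', carrying $g^{(i)}$ at position $(i,i+1)$ for $i=1,\ldots,k-2$ and $g^{(k-1)}$ at position $(k-1,1)$. Denoting by $\sigma$ the cyclic permutation $i\mapsto i+1\bmod(k-1)$, the ordinary product of $N$ copies of $M^g(k-1)$ is again supported on $\{(i,\sigma^N(i))\}$, with the single non-zero row-$i$ entry equal to the path product $g^{(i)}g^{(\sigma(i))}\cdots g^{(\sigma^{N-1}(i))}$. Since $N=\ell(k-1)+1\equiv 1\pmod{k-1}$, we have $\sigma^N=\sigma$, so that $M^{\langle\ell\rangle_k}$ has exactly the same shape as $M^g(k-1)$, and only the non-zero entries need to be compared.

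The final step is purely combinatorial: decompose the $N-1=\ell(k-1)$ ``intermediate'' factors into $\ell$ complete $\sigma$-cycles starting at row $i$, and append the residual factor $g^{(i)}$ that arises because $\sigma^{N-1}=\mathrm{id}$. This identifies the row-$i$ entry of $M^{\langle\ell\rangle_k}$ with the left-hand side of the $i$-th relation in (\ref{ggg1})--(\ref{ggg3}), and equating it with the corresponding entry $g^{(i)}$ of $M^g(k-1)$ yields the $(k-1)$ stated identities as $i$ ranges over $1,\ldots,k-1$. The main obstacle is not conceptual but notational: one must track the cyclic index shifts modulo $k-1$ and preserve the order of the non-commutative factors, which is why writing the products as $\ell$-fold repetitions of an explicit cyclic block is the cleanest presentation.
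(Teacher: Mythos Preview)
Your argument is correct and follows essentially the same route as the paper: the paper's proof is the single line ``This also follows from (\ref{mgk}), (\ref{mgm}) and (\ref{mlm})'', and what you have written is precisely the explicit unpacking of that line --- rewriting $M^{\langle\ell\rangle_k}$ as the ordinary $(\ell(k-1)+1)$-fold matrix product, using the cyclic-shift structure of (\ref{mgk}) to compute the entries, and comparing with $M^g(k-1)$ componentwise. There is no substantive difference in approach, only in the level of detail.
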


\begin{proof}
This also follows from (\ref{mgk}), (\ref{mgm}) and (\ref{mlm}).
\end{proof}

\begin{definition}
The relations (\ref{gg1})--(\ref{gg3}) are called (higher) $\left[  k\right]
$\textit{-}$\left\langle \ell\right\rangle $-\textit{regularity}. The case
$k=3$ (\ref{g1})--(\ref{g2}) is the standard regularity ($\left[  3\right]
$-$\left\langle 1\right\rangle $-regularity in this notation).
\end{definition}

\begin{definition}
A binary semigroup $G_{free}$, in which any $k-1$ elements are $\left[
k\right]  $-regular ($\left[  k\right]  $-$\left\langle \ell\right\rangle
$-regular), is called a \textit{higher }$\left[  k\right]  $\textit{-regular
(}$\left[  k\right]  $\textit{-}$\left\langle \ell\right\rangle $%
\textit{-regular) semigroup} $G_{reg}\left[  k\right]  $ ($G_{\left\langle
\ell\right\rangle \text{-}reg}\left[  k\right]  $).
\end{definition}

Similarly to \textbf{Assertion} \ref{asser-lreg}, it is seen that $\left[
k\right]  $-$\left\langle \ell\right\rangle $-regularity (\ref{ggg1}%
)--(\ref{ggg3}) follows from $\left[  k\right]  $-regularity (\ref{gg1}%
)--(\ref{gg3}), but not the other way around, and therefore we have

\begin{assertion}
If a binary semigroup $G_{reg}\left[  k\right]  $ is $\left[  k\right]
$-regular, then it is $\left[  k\right]  $-$\left\langle \ell\right\rangle
$-regular as well, but not vice-versa.
\end{assertion}

\begin{proposition}
\label{prop-smg-abk}The set of idempotent ($\left\langle \ell\right\rangle
$-idempotent) $k$-ary matrices $\mathrm{M}_{\operatorname*{id}}^{g}\left(
k-1\right)  $ form a $k$-ary semigroup \textit{ }$\mathcal{M}%
_{3,\operatorname*{id}}^{g}=\left\{  \mathrm{M}_{\operatorname*{id}}%
^{g}\left(  k-1\right)  \mid\mu_{k}^{g}\right\}  $, if and only if
$G_{reg}\left[  k\right]  $ ($G_{\ell\text{-}reg}\left[  k\right]  $) is abelian.
\end{proposition}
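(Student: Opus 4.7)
The plan is to extend the proof of \textbf{Proposition} \ref{prop-smg-ab} from the ternary case to arbitrary arity $k$, treating the two directions separately and then observing that the $\left\langle \ell\right\rangle $-case reduces to the $\left\langle 1\right\rangle $-case by iteration.

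First I would settle the ``if'' direction. Let $M_{1}^{g}\left( k-1\right) ,\ldots,M_{k}^{g}\left( k-1\right) $ be idempotent $k$-ary matrices with entries $g_{i}^{\left( j\right) }$ (for $i=1,\ldots ,k$, $j=1,\ldots ,k-1$). Because $M^{g}\left( k-1\right) $ has the structure of a cyclic shift of length $k-1$ with arbitrary ``weights'' $g^{\left( j\right) }$, the $k$-fold ordinary product $M^{\prime }=M_{1}^{g}\cdots M_{k}^{g}$ executes a total shift of $k\equiv 1\pmod{k-1}$, hence it is again of the shape (\ref{mgk}). Its non-zero entries are explicit ordered words in the $g_{i}^{\left( j\right) }$. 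Assuming $G_{reg}\left[ k\right] $ is abelian, I would rewrite each such word so that, for every fixed diagonal position, the subword coming from a single $M_{i}^{g}$ is contiguous and appears in the cyclic order $g_{i}^{\left( 1\right) }g_{i}^{\left( 2\right) }\cdots g_{i}^{\left( k-1\right) }$ (possibly cyclically rotated according to the diagonal). Applying the $\left[ k\right] $-regularity relations (\ref{gg1})--(\ref{gg3}) to each $M_{i}^{g}$ then collapses the whole expression back to the corresponding entry of $M^{\prime }$, so $\left( M^{\prime }\right) ^{\left\langle 1\right\rangle _{k}}=M^{\prime }$; that is, $M^{\prime }\in \mathrm{M}_{\operatorname*{id}}^{g}\left( k-1\right) $.

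For the ``only if'' direction I would argue by contraposition. Suppose two elements $a,b\in G_{reg}\left[ k\right] $ do not commute. I would construct $k$ idempotent matrices by choosing, in each $M_{i}^{g}$, entries satisfying (\ref{gg1})--(\ref{gg3}) but tuned so that $a$ and $b$ occupy two specified positions, with the remaining entries taken so as to be absorbed by the regularity relations (a convenient choice is to populate the other slots with group-like units associated to the regular pair). Expanding one diagonal entry of $\left( M^{\prime }\right) ^{\left\langle 1\right\rangle _{k}}$ and equating it to the corresponding entry of $M^{\prime }$ would then force a relation of the form $ab=ba$, contradicting the assumption and proving that closure implies commutativity.

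Finally, the $\left\langle \ell\right\rangle $-idempotent statement is obtained by replacing the equality $M^{\left\langle 1\right\rangle _{k}}=M$ with $M^{\left\langle \ell\right\rangle _{k}}=M$ throughout; by (\ref{ml}) this only lengthens each word in the $g_{i}^{\left( j\right) }$ by a factor of $\ell$, and the same rearrangement via commutativity together with (\ref{ggg1})--(\ref{ggg3}) closes the argument in both directions.

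The main obstacle will be the index bookkeeping in the forward direction: in the $k$-ary product one must verify that each of the $k-1$ non-zero entries of $\left( M^{\prime }\right) ^{\left\langle 1\right\rangle _{k}}$, which is a word of length $k\left( k-1\right) $ in the $g_{i}^{\left( j\right) }$ glued cyclically, can indeed be regrouped into $k-1$ blocks, each matching the left-hand side of one of the relations (\ref{gg1})--(\ref{gg3}) applied to a single $M_{i}^{g}$. Once this combinatorial pattern is made explicit (say by labelling the matrix slots by $\mathbb{Z}/(k-1)$ and tracking the cyclic offset contributed by each factor), both directions and the $\left\langle \ell\right\rangle $-version follow uniformly.
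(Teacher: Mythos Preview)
Your overall strategy coincides with the paper's: the paper's entire proof is a single sentence asserting that idempotence is preserved under the $k$-ary product \emph{only} when all the $g^{(j)}$ mutually commute, i.e.\ exactly the rearrange-by-commutativity-then-apply-(\ref{gg1})--(\ref{ggg3}) idea you spell out. Your ``if'' direction is a correct and welcome expansion of that assertion; for $k=3$, for instance, the first entry of $(M')^{\langle 1\rangle_3}$ is $a_1b_2a_3\,b_1a_2b_3\,a_1b_2a_3$, which in the abelian case regroups as $(a_1b_1a_1)(b_2a_2b_2)(a_3b_3a_3)=a_1b_2a_3$, and your description of the general cyclic bookkeeping is the right way to carry this to arbitrary $k$ and $\ell$.

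The place where your argument is genuinely incomplete is the ``only if'' direction. You propose to build idempotent $M_i$ by padding with ``group-like units associated to the regular pair'', but $G_{reg}[k]$ is only a semigroup: there need be no identity, and the idempotents $ab$, $ba$ produced by a regular pair are one-sided absorbers for $a,b$, not neutral elements. Without such units it is not clear that you can isolate a single commutator $ab$ versus $ba$ in one diagonal entry of $(M')^{\langle 1\rangle_k}$ while keeping all $k$ input matrices idempotent. The paper does not supply this step either---its proof is the bare assertion quoted above---so you are not missing something the paper provides; but if you want a proof rather than a claim, this is the part that still needs a concrete construction (or an explicit counterexample in a small non-abelian $[k]$-regular semigroup showing closure fails). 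A minor point: the entrywise word in $(M')^{\langle 1\rangle_k}$ has length $k\cdot k=k^2$, not $k(k-1)$.
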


\begin{proof}
It follows from (\ref{gg1})--(\ref{ggg3}) that the idempotence ($\left\langle
\ell\right\rangle $-idempotence) and the following $\left[  k\right]
$-regularity ($\left[  k\right]  $-$\left\langle \ell\right\rangle
$-regularity) are preserved with respect the $k$-ary multiplication
(\ref{mgm}) only in the case, when all $g^{\left(  1\right)  },\ldots
,g^{\left(  k-1\right)  }\in G_{free}$ mutually commute.
\end{proof}

By analogy with (\ref{c}), we have

\begin{definition}
We will say that the set of $k$-ary $\left(  k-1\right)  \times\left(
k-1\right)  $ matrices $\mathrm{M}_{\operatorname*{id}}^{g}\left(  k-1\right)
$ (\ref{mgk}) over the underlying set $\mathrm{G}$ is in
\textit{polyadic-binary correspondence} with the binary $\left[  k\right]
$-regular semigroup $G_{reg}\left[  k\right]  $ and write this as%
\begin{equation}
\mathrm{M}_{\operatorname*{id}}^{g}\left(  k-1\right)  \Bumpeq G_{reg}\left[
k\right]  . \label{ck}%
\end{equation}

\end{definition}

Thus, using the idempotence condition for $k$-ary matrices in components
(being simultaneously elements of a binary semigroup $G_{free}$) and the
polyadic-binary correspondence (\ref{ck}) we obtain the higher regularity
conditions (\ref{gg1})--(\ref{ggg3}) generalizing the ordinary regularity
(\ref{g1})--(\ref{g2}), which allows us to define the higher $\left[
k\right]  $-regular binary semigroups $G_{reg}\left[  k\right]  $
($G_{\left\langle \ell\right\rangle \text{-}reg}\left[  k\right]  $).

\begin{example}
\label{ex-smgk4}The lowest nontrivial ($k\geq3$) case is $k=4$, where the
$3\times3$ matrices over $G_{free}$ are of the shape%
\begin{equation}
M\left(  3\right)  =M^{\left(  3\times3\right)  }=\left(
\begin{array}
[c]{ccc}%
0 & a & 0\\
0 & 0 & b\\
c & 0 & 0
\end{array}
\right)  ,\ \ \ a,b,c\in G_{free},
\end{equation}
and they form the $4$-ary matrix semigroup $\mathcal{M}_{4}^{g}$. The
idempotence $\left(  M\left(  3\right)  \right)  ^{\left\langle 1\right\rangle
_{4}}=\left(  M\left(  3\right)  \right)  ^{3}=M\left(  3\right)  $ gives
three $\left[  4\right]  $-regularity conditions%
\begin{align}
abca  &  =a,\label{a1}\\
bcab  &  =b,\label{a2}\\
cabc  &  =c. \label{a3}%
\end{align}

According to the polyadic-binary correspondence (\ref{ck}), the conditions
(\ref{a1})--(\ref{a3}) are $\left[  4\right]  $-regularity relations for the
binary semigroup $G_{free}$, which defines to the higher $\left[  4\right]
$-regular binary semigroup $G_{reg}\left[  4\right]  $.

In the case $\ell=2$, we have $\left(  M\left(  3\right)  \right)
^{\left\langle 2\right\rangle _{4}}=\left(  M\left(  3\right)  \right)
^{7}=M\left(  3\right)  $, which gives three $\left[  4\right]  $%
-$\left\langle 2\right\rangle $-regularity conditions (they are different from
$\left[  7\right]  $-regularity)%
\begin{align}
abcabca  &  =a,\label{aa1}\\
bcabcab  &  =b,\label{aa2}\\
cabcabc  &  =c, \label{aa3}%
\end{align}
and these define the higher $\left[  4\right]  $-$\left\langle 2\right\rangle
$-regular binary semigroup $G_{\left\langle 2\right\rangle \text{-}reg}\left[
4\right]  $. Obviously, (\ref{aa1})--(\ref{aa3}) follow from (\ref{a1}%
)--(\ref{a3}), but not vice-versa.
\end{example}

The higher regularity conditions (\ref{a1})--(\ref{a3}) obtained above from
the idempotence of polyadic matrices using the polyadic-binary
correspondence,\textit{ }appeared first in \cite{dup18} and were then used for
transition functions in the investigation of semisupermanifolds \cite{duplij}
and higher regular categories in TQFT \cite{dup/mar2001a,dup/mar7}.

Now we turn to the second line of (\ref{mk}), and in the same way as above
introduce higher degree braid groups.

\section{\label{sec-tern}\textsc{Ternary matrix group corresponding to the
braid group}}

Recall the definition of the Artin braid group \cite{art47} in terms of
generators and relations \cite{kas/tur2008} (we follow the algebraic approach,
see, e.g. \cite{mar45}).

The \textit{Artin braid group} $B_{n}$ (with $n$ strands and the identity
$e\in B_{n}$) has the presentation by $n-1$ generators $\sigma_{1}%
,\ldots,\sigma_{n-1}$ satisfying $n\left(  n-1\right)  /2$ relations%
\begin{align}
\sigma_{i}\sigma_{i+1}\sigma_{i}  &  =\sigma_{i+1}\sigma_{i}\sigma
_{i+1},\ \ \ 1\leq i\leq n-2,\label{s1}\\
\sigma_{i}\sigma_{j}  &  =\sigma_{j}\sigma_{i},\ \ \ \ \left\vert
i-j\right\vert \geq2, \label{s2}%
\end{align}
where (\ref{s1}) are called the \textit{braid relations}, and (\ref{s2}) are
called \textit{far commutativity}. A general element of $B_{n}$ is a word of
the form%
\begin{equation}
w=\sigma_{i_{1}}^{p_{1}}\ldots\sigma_{i_{r}}^{p_{r}}\ldots\sigma_{i_{m}%
}^{p_{m}},\ \ \ i_{m}=1,\ldots,n, \label{w}%
\end{equation}
where $p_{r}\in\mathbb{Z}$ are (positive or negative) powers of the generators
$\sigma_{i_{r}}$, $r=1,\ldots,m$ and $m\in\mathbb{N}$.

For instance, $B_{3}$ is generated by $\sigma_{1}$ and $\sigma_{2}$ satisfying
one relation $\sigma_{1}\sigma_{2}\sigma_{1}=\sigma_{2}\sigma_{1}\sigma_{2}$,
and is isomorphic to the trefoil knot group. The group $B_{4}$ has 3
generators $\sigma_{1},\sigma_{2},\sigma_{3}$ satisfying%
\begin{align}
\sigma_{1}\sigma_{2}\sigma_{1}  &  =\sigma_{2}\sigma_{1}\sigma_{2}%
,\label{bs1}\\
\sigma_{2}\sigma_{3}\sigma_{2}  &  =\sigma_{3}\sigma_{2}\sigma_{3}%
,\label{bs2}\\
\sigma_{1}\sigma_{3}  &  =\sigma_{3}\sigma_{1}. \label{bs3}%
\end{align}

The representation theory of $B_{n}$ is well known and well established
\cite{kas/tur2008,kauffman}. The connections with the Yang-Baxter equation
were investigated, e.g. in \cite{tur88}.

Now we build a ternary group of matrices over $B_{n}$ having generators
satisfying relations which are connected with the braid relations
(\ref{s1})--(\ref{s2}). We then generalize our construction to a $k$-ary
matrix group, which gives us the possibility to \textquotedblleft go
back\textquotedblright\ and define some special higher analogs of the Artin
braid group.

Let us consider the set of anti-diagonal $2\times2$ matrices over $B_{n}$%
\begin{equation}
M\left(  2\right)  =M^{\left(  2\times2\right)  }\left(  b^{\left(  1\right)
},b^{\left(  2\right)  }\right)  =\left(
\begin{array}
[c]{cc}%
0 & b^{\left(  1\right)  }\\
b^{\left(  2\right)  } & 0
\end{array}
\right)  ,\ \ \ b^{\left(  1\right)  },b^{\left(  2\right)  }\in B_{n}.
\label{m2}%
\end{equation}

\begin{definition}
The set of matrices $\mathrm{M}\left(  2\right)  =\left\{  M\left(  2\right)
\right\}  $ (\ref{m2}) over $B_{n}$ form a \textit{ternary matrix semigroup}
$\mathcal{M}_{k=3}=\mathcal{M}_{3}=\left\{  \mathrm{M}\left(  2\right)
\mid\mu_{3}\right\}  $, where $k=3$ is the -arity of the following
multiplication%
\begin{align}
\mu_{3}\left[  M_{1}\left(  2\right)  ,M_{2}\left(  2\right)  ,M_{3}\left(
2\right)  \right]   &  \equiv M_{1}\left(  2\right)  ,M_{2}\left(  2\right)
,M_{3}\left(  2\right)  =M\left(  2\right)  ,\label{mb1}\\
b_{1}^{\left(  1\right)  }b_{2}^{\left(  2\right)  }b_{3}^{\left(  1\right)
}  &  =b^{\left(  1\right)  },\label{mb2}\\
b_{1}^{\left(  2\right)  }b_{2}^{\left(  1\right)  }b_{3}^{\left(  2\right)
}  &  =b^{\left(  2\right)  },\ \ \ \ b_{i}^{\left(  1\right)  }%
,b_{i}^{\left(  2\right)  }\in B_{n},\ \ M_{i}\left(  2\right)  =\left(
\begin{array}
[c]{cc}%
0 & b_{i}^{\left(  1\right)  }\\
b_{i}^{\left(  2\right)  } & 0
\end{array}
\right)  \label{mb3}%
\end{align}
and the associativity is governed by the associativity of both the ordinary
matrix product in the r.h.s. of (\ref{mb1}) and $B_{n}$.
\end{definition}

\begin{proposition}
$\mathcal{M}^{\left(  3\right)  }$ is a ternary matrix group.
\end{proposition}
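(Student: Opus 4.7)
The plan is to verify the two missing ingredients that upgrade the ternary matrix semigroup $\mathcal{M}_{3}$ to a ternary group: (i) confirm closure and total associativity of $\mu_{3}$ on the anti-diagonal matrices $\mathrm{M}(2)$, and (ii) exhibit a querelement (polyadic inverse) for every $M(2)\in\mathcal{M}_{3}$. Closure is immediate, because an ordinary product of three anti-diagonal $2\times 2$ matrices is again anti-diagonal (odd number of anti-diagonal factors). Total associativity of the ternary product is inherited from the associativity of ordinary matrix multiplication over $B_{n}$ together with the associativity of the binary group $B_{n}$ itself, which is the content of (\ref{mb1})--(\ref{mb3}). So after a brief sentence on each, the only substantive task is the querelement.

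For the querelement I would proceed as follows. Given
\[
M(2)=\left(\begin{array}{cc}0 & b^{(1)}\\ b^{(2)} & 0\end{array}\right),
\]
I would guess
\[
\bar M(2)=\left(\begin{array}{cc}0 & \bigl(b^{(2)}\bigr)^{-1}\\ \bigl(b^{(1)}\bigr)^{-1} & 0\end{array}\right),
\]
which is well-defined because $B_{n}$ is a group. The verification reduces to direct computation of the three matrix products $\mu_{3}[\bar M,M,M]$, $\mu_{3}[M,\bar M,M]$, $\mu_{3}[M,M,\bar M]$ in each of the three querelement positions, and in each case the diagonal entries collapse via $b^{(i)}(b^{(i)})^{-1}=e$ to give back $M(2)$. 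This shows that $\bar M(2)$ plays the role of the querelement for $M(2)$ in the Post sense, and its uniqueness follows from the uniqueness of the binary inverses $(b^{(i)})^{-1}$ in $B_{n}$.

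Finally, I would bundle these three checks into the standard polyadic-group criterion: a $k$-ary semigroup is a $k$-ary group iff for every element a querelement exists, equivalently iff the equations $\mu_{k}[M_{1},\ldots ,M_{i-1},X,M_{i+1},\ldots ,M_{k}]=M$ are uniquely solvable for $X$ in each position $i$. Since one can solve each such equation position-by-position by isolating the unknown anti-diagonal entries and inverting in $B_{n}$, the conclusion that $\mathcal{M}_{3}$ is a ternary matrix group follows.

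The only mildly delicate point is the bookkeeping in the querelement calculation: because the nonzero entries of a product $M_{1}M_{2}M_{3}$ mix $b_{1}^{(1)}b_{2}^{(2)}b_{3}^{(1)}$ with $b_{1}^{(2)}b_{2}^{(1)}b_{3}^{(2)}$ as in (\ref{mb2})--(\ref{mb3}), one has to keep track of which factor $b^{(1)}$ or $b^{(2)}$ is inverted in which slot of $\bar M$. Beyond that, no use of the braid relations (\ref{s1})--(\ref{s2}) is needed; only the group structure of $B_{n}$ is used, so the proof goes through with $B_{n}$ replaced by any binary group.
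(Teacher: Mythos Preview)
Your proof is correct and follows essentially the same route as the paper: show that every $M(2)\in\mathcal{M}_{3}$ has a querelement satisfying (\ref{m1}), using only the binary group structure of $B_{n}$. Your displayed $\bar M(2)$ with entries $(b^{(2)})^{-1}$ and $(b^{(1)})^{-1}$ swapped is in fact the genuine matrix inverse and is what actually satisfies (\ref{m1}); the paper's displayed formula (\ref{mm}) has the two entries in the unswapped positions, which fails the querelement identities for noncommuting $b^{(1)},b^{(2)}$, so your ``mildly delicate bookkeeping'' remark is exactly on point.
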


\begin{proof}
Each element of the ternary matrix semigroup $M\left(  2\right)
\in\mathcal{M}_{3}$ is invertible (in the ternary sense) and has a
\textit{querelement} $\bar{M}\left(  2\right)  $ (a polyadic analog of the
group inverse \cite{dor3}) defined by%
\begin{equation}
\mu_{3}\left[  M\left(  2\right)  ,M\left(  2\right)  ,\bar{M}\left(
2\right)  \right]  =\mu_{3}\left[  M\left(  2\right)  ,\bar{M}\left(
2\right)  ,M\left(  2\right)  \right]  =\mu_{3}\left[  \bar{M}\left(
2\right)  ,M\left(  2\right)  ,M\left(  2\right)  \right]  =M\left(  2\right)
. \label{m1}%
\end{equation}
It follows from (\ref{mb1})--(\ref{mb3}), that%
\begin{equation}
\bar{M}\left(  2\right)  =\left(  M\left(  2\right)  \right)  ^{-1}=\left(
\begin{array}
[c]{cc}%
0 & \left(  b^{\left(  1\right)  }\right)  ^{-1}\\
\left(  b^{\left(  2\right)  }\right)  ^{-1} & 0
\end{array}
\right)  ,\ \ \ b^{\left(  1\right)  },b^{\left(  2\right)  }\in B_{n},
\label{mm}%
\end{equation}
where $\left(  M^{\left(  2\right)  }\right)  ^{-1}$ denotes the ordinary
matrix inverse (but not the binary group inverse which does not exist in the
$k$-ary case, $k\geq3$). Non-commutativity of $\mu_{3}$ is provided by
(\ref{mb2})--(\ref{mb3}).
\end{proof}

The ternary matrix group $\mathcal{M}_{3}$ has the \textit{ternary identity}%
\begin{equation}
E\left(  2\right)  =\left(
\begin{array}
[c]{cc}%
0 & e\\
e & 0
\end{array}
\right)  ,\ \ \ e\in B_{n}, \label{e2}%
\end{equation}
where $e$ is the identity of the binary group $B_{n}$, and%
\begin{equation}
\mu_{3}\left[  M\left(  2\right)  ,E\left(  2\right)  ,E\left(  2\right)
\right]  =\mu_{3}\left[  E\left(  2\right)  ,M\left(  2\right)  ,E\left(
2\right)  \right]  =\mu_{3}\left[  E\left(  2\right)  ,E\left(  2\right)
,M\left(  2\right)  \right]  =M\left(  2\right)  . \label{mee}%
\end{equation}

We observe that the ternary product $\mu_{3}$ in components is
\textquotedblleft naturally braided\textquotedblright\ (\ref{mb2}%
)--(\ref{mb3}). This allows us to ask the question: which generators of the
ternary group $\mathcal{M}_{3}$ can be constructed using the Artin braid group
generators $\sigma_{i}\in B_{n}$ and the relations (\ref{s1})--(\ref{s2})?

\section{\label{sec-terngen}\textsc{Ternary matrix generators}}

Let us introduce $\left(  n-1\right)  ^{2}$ ternary $2\times2$ \textit{matrix
generators}%
\begin{equation}
\Sigma_{ij}\left(  2\right)  =\Sigma_{ij}^{\left(  2\times2\right)  }\left(
\sigma_{i},\sigma_{j}\right)  =\left(
\begin{array}
[c]{cc}%
0 & \sigma_{i}\\
\sigma_{j} & 0
\end{array}
\right)  , \label{ss}%
\end{equation}
where $\sigma_{i}\in B_{n}$, $i=1\ldots,n-1$ are generators of the Artin braid
group. The querelement of $\Sigma_{ij}\left(  2\right)  $ is defined by
analogy with (\ref{mm}) as%
\begin{equation}
\bar{\Sigma}_{ij}\left(  2\right)  =\left(  \Sigma_{ij}\left(  2\right)
\right)  ^{-1}=\left(
\begin{array}
[c]{cc}%
0 & \sigma_{j}^{-1}\\
\sigma_{i}^{-1} & 0
\end{array}
\right)  .
\end{equation}

Now we are in a position to present a ternary matrix group with multiplication
$\mu_{3}$ in terms of generators and relations in such a way that the braid
group relations (\ref{s1})--(\ref{s2}) will be reproduced.

\begin{proposition}
The relations for the matrix generators $\Sigma_{ij}\left(  2\right)  $
corresponding to the braid group relations for $\sigma_{i}$ (\ref{s1}%
)--(\ref{s2}) have the form%
\begin{align}
& \mu_{3}\left[  \Sigma_{i,j+1}\left(  2\right)  ,\Sigma_{i,j+1}\left(
2\right)  ,\Sigma_{i,j+1}\left(  2\right)  \right]  =\mu_{3}\left[
\Sigma_{j+1,i}\left(  2\right)  ,\Sigma_{j+1,i}\left(  2\right)
,\Sigma_{j+1,i}\left(  2\right)  \right] \nonumber\\
&  =q_{i}^{\left[  3\right]  }E\left(  2\right)  ,\ \ \ 1\leq i\leq
n-2,\label{ms1}\\
& \mu_{3}\left[  \Sigma_{ij}\left(  2\right)  ,\Sigma_{ij}\left(  2\right)
,E\left(  2\right)  \right]  =\mu_{3}\left[  \Sigma_{ji}\left(  2\right)
,\Sigma_{ji}\left(  2\right)  ,E\left(  2\right)  \right]  ,\ \ \ \ \left\vert
i-j\right\vert \geq2, \label{ms3}%
\end{align}
where $q_{i}^{\left[  3\right]  }=\sigma_{i}\sigma_{i+1}\sigma_{i}%
=\sigma_{i+1}\sigma_{i}\sigma_{i+1}$, and $E\left(  2\right)  $ is the ternary
identity (\ref{e2}).
\end{proposition}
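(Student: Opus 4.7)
The plan is to verify both (\ref{ms1}) and (\ref{ms3}) by direct $2 \times 2$ matrix computation in the ternary semigroup $\mathcal{M}_3$, using only the definition (\ref{ss}) of $\Sigma_{ij}(2)$, the form of $E(2)$ in (\ref{e2}), and the Artin braid relations (\ref{s1})--(\ref{s2}). The central calculation is the cube of a generic anti-diagonal generator: starting from $\Sigma_{ab}(2) = \left(\begin{smallmatrix} 0 & \sigma_a \\ \sigma_b & 0 \end{smallmatrix}\right)$, the square equals the diagonal matrix with entries $\sigma_a \sigma_b$ and $\sigma_b \sigma_a$, and the cube is the anti-diagonal matrix with upper-right entry $\sigma_a \sigma_b \sigma_a$ and lower-left entry $\sigma_b \sigma_a \sigma_b$. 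Thus the anti-diagonal shape is preserved under odd ternary powers, exactly matching the shape of $E(2)$.

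For (\ref{ms1}) I would specialise to $a = i$, $b = i+1$ (so the index $j+1$ in the statement is $i+1$). The two off-diagonal entries of $\Sigma_{i,i+1}(2)^3$ are then $\sigma_i \sigma_{i+1} \sigma_i$ and $\sigma_{i+1} \sigma_i \sigma_{i+1}$, which coincide by the braid relation (\ref{s1}) and equal the common value $q_i^{[3]}$. Reading this off against (\ref{e2}) yields $\mu_3[\Sigma_{i,i+1}(2), \Sigma_{i,i+1}(2), \Sigma_{i,i+1}(2)] = q_i^{[3]} E(2)$. Exchanging the roles of $\sigma_i$ and $\sigma_{i+1}$ produces the same anti-diagonal matrix for $\Sigma_{i+1,i}(2)^3$, again by (\ref{s1}), so the two cubes are literally equal in $\mathcal{M}_3$, establishing (\ref{ms1}).

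For (\ref{ms3}) I would compute $\mu_3[\Sigma_{ij}(2), \Sigma_{ij}(2), E(2)] = \Sigma_{ij}(2)^2 \cdot E(2)$; combining the square formula above with the form of $E(2)$ this is the anti-diagonal matrix with upper-right entry $\sigma_i \sigma_j$ and lower-left entry $\sigma_j \sigma_i$. Doing the same with $\Sigma_{ji}(2)$ swaps these two entries, so equality of the two products is exactly the condition $\sigma_i \sigma_j = \sigma_j \sigma_i$, which holds for $|i-j| \geq 2$ by the far-commutativity relation (\ref{s2}). The argument is essentially a verification rather than a theorem with a genuine obstacle; the only conceptual point, and the one that will guide the higher-arity generalisations to come, is that the anti-diagonal form is stable under odd ternary powers but becomes diagonal under even ones, which is precisely why the braid relation appears naturally as a cube and far commutativity appears as a square sandwiched with the ternary identity.
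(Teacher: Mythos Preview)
Your proposal is correct and follows exactly the approach the paper indicates: use $\mu_3$ as the triple matrix product (\ref{mb1})--(\ref{mb3}) together with the braid relations (\ref{s1})--(\ref{s2}). The paper's own proof is a single sentence to this effect, so your explicit unwinding of the cube and square computations, and your identification of the index $j+1$ with $i+1$, simply fill in details the paper leaves to the reader.
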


\begin{proof}
Use $\mu_{3}$ as the triple matrix product (\ref{mb1})--(\ref{mb3}) and the
braid relations (\ref{s1})--(\ref{s2}).
\end{proof}

\begin{definition}
We say that the ternary matrix group $\mathcal{M}_{3}^{gen\text{-}\Sigma}$
generated by the matrix generators $\Sigma_{ij}\left(  2\right)  $ satisfying
the relations (\ref{ms1})--(\ref{ms3}) is in \textit{ternary-binary
correspondence} with the braid (binary) group $B_{n}$, which is denoted as
(cf. (\ref{c}))%
\begin{equation}
\mathcal{M}_{3}^{gen\text{-}\Sigma}\Bumpeq B_{n}. \label{c3}%
\end{equation}

\end{definition}

Indeed, in components the relations (\ref{ms1}) give (\ref{s1}), and
(\ref{ms3}) leads to (\ref{s2}).

\begin{remark}
\label{rem-rep}Note that the above construction is totally different from the
bi-element representations of ternary groups considered in \cite{bor/dud/dup3}
(for $k$-ary groups see \cite{dup2018a}).
\end{remark}

\begin{definition}
An element $M\left(  2\right)  \in\mathcal{M}_{3}$ is of \textit{finite
polyadic (ternary) order}, if there exists a finite $\ell$ such that%
\begin{equation}
M\left(  2\right)  ^{\left\langle \ell\right\rangle _{3}}=M\left(  2\right)
^{2\ell+1}=E\left(  2\right)  , \label{me}%
\end{equation}
where $E\left(  2\right)  $ is the ternary matrix identity (\ref{e2}).
\end{definition}

\begin{definition}
An element $M\left(  2\right)  \in\mathcal{M}_{3}$ is of \textit{finite }%
$q$\textit{-polyadic (}$q$\textit{-ternary) order}, if there exists a finite
$\ell$ such that%
\begin{equation}
M\left(  2\right)  ^{\left\langle \ell\right\rangle _{3}}=M\left(  2\right)
^{2\ell+1}=qE\left(  2\right)  ,\ \ \ q\in B_{n}.
\end{equation}

\end{definition}

The relations (\ref{ms1}) therefore say that the ternary matrix generators
$\Sigma_{i,j+1}\left(  2\right)  $ are of finite $q$-ternary order. Each
element of $\mathcal{M}_{3}^{gen\text{-}\Sigma}$ is a ternary matrix word
(analogous to the binary word (\ref{w})), being the ternary product of the
polyadic powers (\ref{ml}) of the $2\times2$ matrix generators $\Sigma
_{ij}^{\left(  2\right)  }$ and their querelements $\bar{\Sigma}_{ij}^{\left(
2\right)  }$ (on choosing the first or second row)%
\begin{align}
W  &  =\left(
\begin{array}
[c]{c}%
\Sigma_{i_{1}j_{1}}\left(  2\right) \\
\bar{\Sigma}_{i_{1}j_{1}}\left(  2\right)
\end{array}
\right)  ^{\left\langle \ell_{1}\right\rangle _{3}},\ldots,\left(
\begin{array}
[c]{c}%
\Sigma_{i_{r}j_{r}}\left(  2\right) \\
\bar{\Sigma}_{i_{r}j_{r}}\left(  2\right)
\end{array}
\right)  ^{\left\langle \ell_{r}\right\rangle _{3}},\ldots,\left(
\begin{array}
[c]{c}%
\Sigma_{i_{m}j_{m}}\left(  2\right) \\
\bar{\Sigma}_{i_{m}j_{m}}\left(  2\right)
\end{array}
\right)  ^{\left\langle \ell_{m}\right\rangle _{3}}\nonumber\\
&  =\left(
\begin{array}
[c]{c}%
\Sigma_{i_{1}j_{1}}\left(  2\right) \\
\bar{\Sigma}_{i_{1}j_{1}}\left(  2\right)
\end{array}
\right)  ^{2\ell_{1}+1},\ldots,\left(
\begin{array}
[c]{c}%
\Sigma_{i_{r}j_{r}}\left(  2\right) \\
\bar{\Sigma}_{i_{r}j_{r}}\left(  2\right)
\end{array}
\right)  ^{2\ell_{r}+1},\ldots,\left(
\begin{array}
[c]{c}%
\Sigma_{i_{m}j_{m}}\left(  2\right) \\
\bar{\Sigma}_{i_{m}j_{m}}\left(  2\right)
\end{array}
\right)  ^{2\ell_{m}+1}, \label{ww}%
\end{align}
where $r=1,\ldots,m$, $i_{r},j_{r}=1,\ldots,n$ (from $B_{n}$), $\ell_{r}%
,m\in\mathbb{N}$. In the ternary case the total number of multipliers in
(\ref{ww}) should be compatible with (\ref{ml}), i.e. $\left(  2\ell
_{1}+1\right)  +\ldots+\left(  2\ell_{r}+1\right)  +\ldots+\left(  2\ell
_{m}+1\right)  =2\ell_{W}+1$, $\ell_{W}\in\mathbb{N}$, and $m$ is therefore
odd. Thus, we have

\begin{remark}
The ternary words (\ref{ww}) in components give only a subset of the binary
words (\ref{w}), and so $\mathcal{M}_{3}^{gen\text{-}\Sigma}$ corresponds to
$B_{n}$, but does not present it.
\end{remark}

\begin{example}
For $B_{3}$ we have only two ternary $2\times2$ matrix generators%
\begin{equation}
\Sigma_{12}\left(  2\right)  =\left(
\begin{array}
[c]{cc}%
0 & \sigma_{1}\\
\sigma_{2} & 0
\end{array}
\right)  ,\ \ \ \Sigma_{21}\left(  2\right)  =\left(
\begin{array}
[c]{cc}%
0 & \sigma_{2}\\
\sigma_{1} & 0
\end{array}
\right)  ,
\end{equation}
satisfying%
\begin{align}
\left(  \Sigma_{12}\left(  2\right)  \right)  ^{\left\langle 1\right\rangle
_{3}}  &  =\left(  \Sigma_{12}\left(  2\right)  \right)  ^{3}=q_{1}^{\left[
3\right]  }E\left(  2\right)  ,\label{c1}\\
\left(  \Sigma_{21}\left(  2\right)  \right)  ^{\left\langle 1\right\rangle
_{3}}  &  =\left(  \Sigma_{21}\left(  2\right)  \right)  ^{3}=q_{1}^{\left[
3\right]  }E\left(  2\right)  , \label{c2}%
\end{align}
where $q_{1}^{\left[  3\right]  }=\sigma_{1}\sigma_{2}\sigma_{1}=\sigma
_{2}\sigma_{1}\sigma_{2}$, and both matrix relations (\ref{c1})--(\ref{c2})
coincide in components.
\end{example}

\begin{example}
For $B_{4}$, the ternary matrix group $\mathcal{M}_{3}^{gen\text{-}\Sigma}$ is
generated by more generators satisfying the relations%
\begin{align}
\left(  \Sigma_{12}\left(  2\right)  \right)  ^{3}  &  =q_{1}^{\left(
3\right)  }E\left(  2\right)  ,\\
\left(  \Sigma_{21}\left(  2\right)  \right)  ^{3}  &  =q_{1}^{\left(
3\right)  }E\left(  2\right)  ,\\
\left(  \Sigma_{23}\left(  2\right)  \right)  ^{3}  &  =q_{2}^{\left(
3\right)  }E\left(  2\right)  ,\\
\left(  \Sigma_{32}\left(  2\right)  \right)  ^{3}  &  =q_{2}^{\left(
3\right)  }E\left(  2\right)  ,\\
\Sigma_{13}\left(  2\right)  \Sigma_{13}\left(  2\right)  E\left(  2\right)
&  =\Sigma_{31}\left(  2\right)  \Sigma_{31}\left(  2\right)  E\left(
2\right)  ,
\end{align}
where $q_{1}^{\left[  3\right]  }=\sigma_{1}\sigma_{2}\sigma_{1}=\sigma
_{2}\sigma_{1}\sigma_{2}$ and $q_{2}^{\left[  3\right]  }=\sigma_{2}\sigma
_{3}\sigma_{2}=\sigma_{3}\sigma_{2}\sigma_{3}$. The first two relations give
the braid relations (\ref{bs1})--(\ref{bs2}), while the last relation
corresponds to far commutativity (\ref{bs3}).
\end{example}

\section{\textsc{Generated }$k$\textsc{-ary matrix group corresponding the
higher braid group}}

The above construction of the ternary matrix group $\mathcal{M}_{3}%
^{gen\text{-}\Sigma}$ corresponding to the braid group $B_{n}$ can be
naturally extended to the $k$-ary case, which will allow us to
\textquotedblleft go in the opposite way\textquotedblright\ and build so
called higher\ degree analogs of $B_{n}$ (in our sense: the number of factors
in braid relations more than $3$). We denote such a
braid-like group with $n$ generators by $\mathcal{B}_{n}\left[  k\right]  $,
where $k$ is the number of generator multipliers in the braid relations (as in
the regularity relations (\ref{gg1})--(\ref{gg3})). Simultaneously $k$ is the
-arity of the matrices (\ref{m2}), we therefore call $\mathcal{B}_{n}\left[
k\right]  $ a higher $k$\textit{-degree analog} of the braid group $B_{n}$. In
this notation the Artin braid group $B_{n}$ is $\mathcal{B}_{n}\left[
3\right]  $. Now we build $\mathcal{B}_{n}\left[  k\right]  $ for any degree
$k$ exploiting the \textquotedblleft reverse\textquotedblright\ procedure, as
for $k=3$ and $B_{n}$ in \textsc{Section \ref{sec-tern}}. For that we need a
$k$-ary generalization of the matrices over $B_{n}$, which in the ternary case
are the anti-diagonal matrices $M\left(  2\right)  $ (\ref{m2}), and the
generator matrices $\Sigma_{ij}\left(  2\right)  $ (\ref{ss}). Then, using the
$k$-ary analog of multiplication (\ref{mb2})--(\ref{mb3}) we will obtain the
higher\ degree (than (\ref{s1})) braid relations which generate the so called
\textit{higher} $k$\textit{-degree braid group}. In distinction to the higher
degree regular semigroup construction from \textsc{Section \ref{sec-smgk}},
where the $k$-ary matrices form a semigroup for the Abelian group $G_{free}$,
using the generator matrices, we construct a $k$-ary matrix semigroup
(presented by generators and relations) for any (even non-commutative) matrix
entries. In this way the polyadic-binary correspondence will connect $k$-ary
matrix groups of finite order with higher binary braid groups (cf. idempotent
$k$-ary matrices and higher regular semigroups (\ref{ck})).

Let us consider a free binary group $\mathcal{B}_{free}$ and construct over it
a $k$-ary matrix group along the lines of \cite{nik84}, similarly to the
ternary matrix group $\mathcal{M}_{3}$ in (\ref{m2})--(\ref{mb3}).

\begin{definition}
A set $\mathrm{M}\left(  k-1\right)  =\left\{  M\left(  k-1\right)  \right\}
$ of $k$-ary $\left(  k-1\right)  \times\left(  k-1\right)  $ matrices%
\begin{align}
M\left(  k-1\right)   &  =M^{\left(  \left(  k-1\right)  \times\left(
k-1\right)  \right)  }\left(  \mathsf{b}^{\left(  1\right)  },\mathsf{b}%
^{\left(  2\right)  },\ldots,\mathsf{b}^{\left(  k-1\right)  }\right)
=\left(
\begin{array}
[c]{ccccc}%
0 & \mathsf{b}^{\left(  1\right)  } & 0 & \ldots & 0\\
0 & 0 & \mathsf{b}^{\left(  1\right)  } & \ldots & 0\\
0 & 0 & \vdots & \ddots & \vdots\\
\vdots & \vdots & 0 & \ddots & \mathsf{b}^{\left(  k-2\right)  }\\
\mathsf{b}^{\left(  k-1\right)  } & 0 & 0 & \ldots & 0
\end{array}
\right)  ,\label{mb}\\
\mathsf{b}^{\left(  j\right)  }  &  \in\mathcal{B}_{free},\ \ \ j=1,\ldots
,k-1,\nonumber
\end{align}
form a $k$-ary matrix semigroup $\mathcal{M}_{k}=\left\{  \mathrm{M}\left(
k-1\right)  \mid\mu_{k}\right\}  $, where $\mu_{k}$ is the $k$-ary
multiplication%
\begin{align}
\mu_{k}\left[  M_{1}\left(  k-1\right)  ,M_{2}\left(  k-1\right)
,\ldots,M_{k}\left(  k-1\right)  \right]   &  =M_{1}\left(  k-1\right)
M_{2}\left(  k-1\right)  \ldots M_{k}\left(  k-1\right)  =M\left(  k-1\right)
,\label{mkk}\\
\mathsf{b}_{1}^{\left(  1\right)  }\mathsf{b}_{2}^{\left(  2\right)  }%
,\ldots\mathsf{b}_{k-1}^{\left(  k-1\right)  }\mathsf{b}_{k}^{\left(
1\right)  }  &  =\mathsf{b}^{\left(  1\right)  },\label{b1}\\
\mathsf{b}_{1}^{\left(  2\right)  }\mathsf{b}_{2}^{\left(  3\right)  }%
,\ldots\mathsf{b}_{k-1}^{\left(  1\right)  }\mathsf{b}_{k}^{\left(  2\right)
}  &  =\mathsf{b}^{\left(  2\right)  },\label{b2}\\
&  \vdots\nonumber\\
\mathsf{b}_{1}^{\left(  k-1\right)  }\mathsf{b}_{2}^{\left(  1\right)
},\ldots\mathsf{b}_{k-1}^{\left(  k-2\right)  }\mathsf{b}_{k}^{\left(
k-1\right)  }  &  =\mathsf{b}^{\left(  k-1\right)  }, \label{b3}%
\end{align}
where the r.h.s. of (\ref{mkk}) is the ordinary matrix multiplication of
$k$-ary matrices (\ref{mb}) $M_{i}\left(  k-1\right)  =M^{\left(  \left(
k-1\right)  \times\left(  k-1\right)  \right)  }\left(  \mathsf{b}%
_{i}^{\left(  1\right)  },\mathsf{b}_{i}^{\left(  2\right)  },\ldots
,\mathsf{b}_{i}^{\left(  k-1\right)  }\right)  $, $i=1,\ldots,k$.
\end{definition}

\begin{proposition}
$\mathcal{M}_{k}$ is a $k$-ary matrix group.
\end{proposition}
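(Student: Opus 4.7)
The plan is to verify the three axioms of a $k$-ary group for $\mathcal{M}_k$: closure of $\mathrm{M}(k-1)$ under $\mu_k$, total associativity, and the existence of a querelement for every element. Closure is precisely what the preceding definition records in (\ref{mkk})--(\ref{b3}): the ordinary matrix product of $k$ factors of the cyclic monomial shape (\ref{mb}) is again of that shape, its nonzero entries being the cyclic products (\ref{b1})--(\ref{b3}) of the individual $\mathsf{b}_i^{(j)}$. Total associativity of $\mu_k$ is inherited at once from the associativity of the ordinary binary matrix product together with associativity in $\mathcal{B}_{free}$, so this costs nothing.

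The real content is producing the querelement. For an arbitrary $M(k-1) \in \mathcal{M}_k$, the candidate $\bar{M}(k-1)$ must satisfy, for every position $j = 1, \ldots, k$,
\begin{equation*}
\mu_k\bigl[\underbrace{M(k-1), \ldots, M(k-1)}_{j-1}, \bar{M}(k-1), \underbrace{M(k-1), \ldots, M(k-1)}_{k-j}\bigr] = M(k-1).
\end{equation*}
Read as an equation in the ordinary matrix product, this becomes $\bigl(M(k-1)\bigr)^{j-1} \bar{M}(k-1) \bigl(M(k-1)\bigr)^{k-j} = M(k-1)$, which forces $\bar{M}(k-1) = \bigl(M(k-1)\bigr)^{-(k-2)}$ uniquely. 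Ordinary invertibility of $M(k-1)$ is guaranteed because every entry $\mathsf{b}^{(j)}$ is invertible in the group $\mathcal{B}_{free}$. For $k=3$ this recovers $\bar{M}(2) = \bigl(M(2)\bigr)^{-1}$ as in (\ref{mm}).

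The one mildly subtle step, and the main (if modest) obstacle, is verifying that $\bigl(M(k-1)\bigr)^{-(k-2)}$ actually belongs to $\mathrm{M}(k-1)$, i.e.\ retains the cyclic monomial shape (\ref{mb}). My plan is to observe that $M(k-1)$ represents a weighted cyclic shift of $\{1, \ldots, k-1\}$, so $\bigl(M(k-1)\bigr)^{k-1}$ collapses to a diagonal matrix $D$ with explicit entries $D_{ii} = \mathsf{b}^{(i)} \mathsf{b}^{(i+1)} \cdots \mathsf{b}^{(i+k-2)}$ (indices mod $k-1$). Consequently
\begin{equation*}
\bar{M}(k-1) = \bigl(M(k-1)\bigr)^{-(k-2)} = M(k-1) \cdot D^{-1},
\end{equation*}
and right-multiplication by a diagonal matrix preserves the shape (\ref{mb}); the new nonzero entries can be read off as $\bar{\mathsf{b}}^{(i)} = \mathsf{b}^{(i)} D_{i+1,i+1}^{-1}$. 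This yields an explicit, constructive querelement in $\mathrm{M}(k-1)$, and together with closure and total associativity it completes the proof that $\mathcal{M}_k$ is a $k$-ary group.
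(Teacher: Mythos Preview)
Your proof is correct and follows the same approach as the paper: the querelement exists because the entries lie in a group $\mathcal{B}_{free}$, so the ordinary matrix inverse is available. The paper's proof is extremely terse---it merely asserts that a querelement satisfying (\ref{mmk}) exists and gives the explicit formula only for $k=3$ in (\ref{mm})---whereas you supply the general formula $\bar{M}(k-1) = \bigl(M(k-1)\bigr)^{-(k-2)}$ and, more importantly, verify via the diagonal-collapse identity $\bigl(M(k-1)\bigr)^{k-1}=D$ that the querelement retains the cyclic monomial shape (\ref{mb}) and hence actually lies in $\mathrm{M}(k-1)$, a point the paper leaves implicit.
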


\begin{proof}
Because $\mathcal{B}_{free}$ is a (binary) group with the identity
$\mathbf{e}\in\mathcal{B}_{free}$, each element of the $k$-ary matrix
semigroup $M\left(  k-1\right)  \in\mathcal{M}_{k}$ is invertible (in the
$k$-ary sense) and has a \textit{querelement} $\bar{M}\left(  k-1\right)  $
(see \cite{dor3}) defined by (cf. (\ref{mee}))%
\begin{equation}
\mu_{k}\left[  \overset{k-1}{\overbrace{M\left(  k-1\right)  ,\ldots,M\left(
k-1\right)  }},\bar{M}\left(  k-1\right)  \right]  =\ldots=M\left(
k-1\right)  , \label{mmk}%
\end{equation}
where $\bar{M}\left(  k-1\right)  $ can be on any place, and so we have $k$
conditions (cf. (\ref{m1}) for $k=3$).
\end{proof}

The $k$-ary matrix group has the polyadic identity%
\begin{equation}
E\left(  k-1\right)  =E^{\left(  \left(  k-1\right)  \times\left(  k-1\right)
\right)  }=\left(
\begin{array}
[c]{ccccc}%
0 & \mathbf{e} & 0 & \ldots & 0\\
0 & 0 & \mathbf{e} & \ldots & 0\\
0 & 0 & \vdots & \ddots & \vdots\\
\vdots & \vdots & 0 & \ddots & \mathbf{e}\\
\mathbf{e} & 0 & 0 & \ldots & 0
\end{array}
\right)  ,\ \ \ \mathbf{e}\in\mathcal{B}_{free}, \label{ek}%
\end{equation}
satisfying%
\begin{equation}
\mu_{k}\left[  M\left(  k-1\right)  ,E\left(  k-1\right)  ,\ldots,E\left(
k-1\right)  \right]  =\ldots=M\left(  k-1\right)  ,
\end{equation}
where $M\left(  k-1\right)  $ can be on any place, and so we have $k$
conditions (cf. (\ref{mee})).

\begin{definition}
An element of a $k$-ary group $M\left(  k-1\right)  \in\mathcal{M}_{k}$ has
the \textit{polyadic order }$\ell$, if%
\begin{equation}
\left(  M\left(  k-1\right)  \right)  ^{\left\langle \ell\right\rangle _{k}%
}\equiv\left(  M\left(  k-1\right)  \right)  ^{\ell\left(  k-1\right)
+1}=E\left(  k-1\right)  ,
\end{equation}
where $E\left(  k-1\right)  \in\mathcal{M}_{k}$ is the polyadic identity
(\ref{ek}), for $k=3$ see (\ref{e2}).
\end{definition}

\begin{definition}
An element ($\left(  k-1\right)  \times\left(  k-1\right)  $-matrix over
$\mathcal{B}_{free}$) $M\left(  k-1\right)  \in\mathcal{M}^{\left\{
k\right\}  }$ is of \textit{finite }$q$\textit{-polyadic order}, if there
exists a finite $\ell$ such that%
\begin{equation}
\left(  M\left(  k-1\right)  \right)  ^{\left\langle \ell\right\rangle _{k}%
}\equiv\left(  M\left(  k-1\right)  \right)  ^{\ell\left(  k-1\right)
+1}=\mathsf{q}E\left(  k-1\right)  ,\ \ \ \mathsf{q}\in\mathcal{B}_{free}.
\label{mq}%
\end{equation}

\end{definition}

Let us assume that the binary group $\mathcal{B}_{free}$ is presented by
generators and relations (cf. the Artin braid group (\ref{s1})--(\ref{s2})),
i.e. it is generated by $n-1$ generators $\mathbf{\sigma}_{i}$, $i=1,\ldots
,n-1$. An element of $\mathcal{B}_{n}^{gen\text{-}\mathbf{\sigma}}%
\equiv\mathcal{B}_{free}\left(  \mathbf{e},\mathbf{\sigma}_{i}\right)  $ is
the word of the form (\ref{w}). To find the relations between $\mathbf{\sigma
}_{i}$ we construct the corresponding $k$-ary matrix generators analogous to
the ternary ones (\ref{ss}). Then using a $k$-ary version of the relations
(\ref{ms1})--(\ref{ms3}) for the matrix generators, as the finite order
conditions (\ref{mq}), we will obtain the corresponding higher degree braid
relations for the binary generators $\mathbf{\sigma}_{i}$, and can therefore
present a higher degree braid group $\mathcal{B}_{n}\left[  k\right]  $ in the
form of generators and relations.

Using $n-1$ generators $\mathbf{\sigma}_{i}$ of $\mathcal{B}_{n}%
^{gen\text{-}\mathbf{\sigma}}$ we build $\left(  n-1\right)  ^{k}$ polyadic
(or $k$-ary) $\left(  k-1\right)  \times\left(  k-1\right)  $-matrix
generators having $k-1$ indices $i_{1},\ldots,i_{k-1}=1,\ldots,n-1$, as
follows%
\begin{equation}
\Sigma_{i_{1},\ldots,i_{k-1}}\left(  k-1\right)  \equiv\Sigma_{i_{1}%
,\ldots,i_{k-1}}^{\left(  \left(  k-1\right)  \times\left(  k-1\right)
\right)  }\left(  \mathbf{\sigma}_{i_{1}},\ldots,\mathbf{\sigma}_{i_{k-1}%
}\right)  =\left(
\begin{array}
[c]{ccccc}%
0 & \mathbf{\sigma}_{i_{1}} & 0 & \ldots & 0\\
0 & 0 & \mathbf{\sigma}_{i_{2}} & \ldots & 0\\
0 & 0 & \vdots & \ddots & \vdots\\
\vdots & \vdots & 0 & \ddots & \mathbf{\sigma}_{i_{k-2}}\\
\mathbf{\sigma}_{i_{k-1}} & 0 & 0 & \ldots & 0
\end{array}
\right)  .\label{sk}%
\end{equation}

For the matrix generator $\Sigma_{i_{1},\ldots,i_{k-1}}\left(  k-1\right)  $
(\ref{sk}) its querelement $\bar{\Sigma}_{i_{1},\ldots,i_{k-1}}\left(
k-1\right)  $ is defined by (\ref{mmk}).

We now build a $k$-ary matrix analog of the braid relations (\ref{s1}),
(\ref{ms1}) and of far commutativity (\ref{s2}), (\ref{ms3}). Using (\ref{sk})
we obtain $\left(  k-1\right)  $ conditions that the matrix generators are of
finite polyadic order (analog of (\ref{ms1}))%

\begin{align}
&  \mu_{k}\left[  \Sigma_{i,i+1,\ldots,i+k-2}\left(  k-1\right)
,\Sigma_{i,i+1,\ldots,i+k-2}\left(  k-1\right)  ,\ldots,\Sigma_{i,i+1,\ldots
,i+k-2}\left(  k-1\right)  \right] \label{me1}\\
&  =\mu_{k}\left[  \Sigma_{i+1,i+2,\ldots,i+k-2,i}\left(  k-1\right)
,\Sigma_{i+1,i+2,\ldots,i+k-2,i}\left(  k-1\right)  ,\ldots,\Sigma
_{i+1,i+2,\ldots,i+k-2,i}\left(  k-1\right)  \right] \\
&  \vdots\nonumber\\
&  \mu_{k}\left[  \Sigma_{i+k-2,i,i+1,\ldots,i+k-3}\left(  k-1\right)
,\Sigma_{i+k-2,i,i+1,\ldots,i+k-3}\left(  k-1\right)  ,\ldots,\Sigma
_{i+k-2,i,i+1,\ldots,i+k-3}\left(  k-1\right)  \right] \\
&  =q_{i}^{\left[  k\right]  }E\left(  k-1\right)  ,\ \ 1\leq i\leq n-k+1,
\label{me2}%
\end{align}
where $E\left(  k-1\right)  $ are polyadic identities (\ref{ek}) and
$q_{i}^{\left[  k\right]  }\in\mathcal{B}_{n}^{gen\text{-}\mathbf{\sigma}}$.

We propose a $k$-ary version of the far commutativity relation (\ref{ms3}) in
the following form%
\begin{align}
&  \mu_{k}\left[  \overset{k-1}{\overbrace{\Sigma_{i_{1},\ldots,i_{k-1}%
}\left(  k-1\right)  ,\ldots,\Sigma_{i_{1},\ldots,i_{k-1}}\left(  k-1\right)
,}}E\left(  k-1\right)  \right]  =\ldots\\
&  =\mu_{k}\left[  \overset{k-1}{\overbrace{\Sigma_{\tau\left(  i_{1}\right)
,\tau\left(  i_{2}\right)  ,\ldots,\tau\left(  i_{k-1}\right)  }\left(
k-1\right)  ,\ldots,\Sigma_{\tau\left(  i_{1}\right)  ,\tau\left(
i_{2}\right)  ,\ldots,\tau\left(  i_{k-1}\right)  }\left(  k-1\right)  ,}%
}E\left(  k-1\right)  \right]  ,\\
\text{if all }\left\vert i_{p}-i_{s}\right\vert  &  \geq
k-1,\ \ \ \ p,s=1,\ldots,k-1,
\end{align}
where $\tau$ is an element the permutation symmetry group $\tau\in S_{k-1}$.

In matrix form we can define

\begin{definition}
A $k$\textit{-ary (generated) matrix group} $\mathcal{M}_{k}^{gen\text{-}%
\Sigma}$ is presented by the $\left(  k-1\right)  \times\left(  k-1\right)  $
matrix generators $\Sigma_{i_{1},\ldots,i_{k-1}}\left(  k-1\right)  $
(\ref{sk}) and the relations (we use (\ref{mkk}))%
\begin{align}
&  \left(  \Sigma_{i,i+1,\ldots,i+k-2}\left(  k-1\right)  \right)
^{k}\label{sk1}\\
&  =\left(  \Sigma_{i+1,i+2,\ldots,i+k-2,i}\left(  k-1\right)  \right)  ^{k}\\
&  \vdots\nonumber\\
&  \left(  \Sigma_{i+k-2,i,i+1,\ldots,i+k-3}\left(  k-1\right)  \right)
^{k}\label{sk2}\\
&  =q_{i}^{\left[  k\right]  }E\left(  k-1\right)  ,\ \ 1\leq i\leq
n-k+1,\nonumber
\end{align}
and%
\begin{align}
\left(  \Sigma_{i_{1},i_{2},\ldots,i_{k-1}}^{\left(  k-1\right)  }\right)
^{k-1}E\left(  k-1\right)   &  =\left(  \Sigma_{\tau\left(  i_{1}\right)
,\tau\left(  i_{2}\right)  ,\ldots,\tau\left(  i_{k-1}\right)  }^{\left(
k-1\right)  }\right)  ^{k-1}E\left(  k-1\right)  ,\label{se}\\
\text{if all }\left\vert i_{p}-i_{s}\right\vert  &  \geq
k-1,\ \ \ \ p,s=1,\ldots,k-1,\nonumber
\end{align}
where $\tau\in S_{k-1}$ and $q_{i}^{\left[  k\right]  }\in\mathcal{B}%
_{n}^{gen\text{-}\mathbf{\sigma}}$.
\end{definition}

Each element of $\mathcal{M}_{k}^{gen\text{-}\Sigma}$ is a $k$-ary matrix word
(analogous to the binary word (\ref{w})) being the $k$-ary product of the
polyadic powers (\ref{ml}) of the matrix generators $\Sigma_{i_{1}%
,\ldots,i_{k-1}}\left(  k-1\right)  $ and their querelements $\bar{\Sigma
}_{i_{1},\ldots,i_{k-1}}\left(  k-1\right)  $ as in (\ref{ww}).

Similarly to the ternary case $k=3$ (\textsc{Section \ref{sec-tern}})\textsc{
}we now develop the $k$-ary \textquotedblleft reverse\textquotedblright%
\ procedure and build from $\mathcal{B}_{n}^{gen\text{-}\mathbf{\sigma}}$ the
higher $k$-degree braid group $\mathcal{B}_{n}\left[  k\right]  $ using
(\ref{sk}). Because the presentation of $\mathcal{M}_{k}^{gen\text{-}\Sigma}$
by generators and relations has already been given in (\ref{sk1})--(\ref{se}),
we need to expand them into components and postulate that these new relations
between the (binary) generators $\mathbf{\sigma}_{i}$ present a new higher
degree analog of the braid group. This gives

\begin{definition}
A \textit{higher }$k$\textit{-degree braid (binary) group} $\mathcal{B}%
_{n}\left[  k\right]  $ is presented by $\left(  n-1\right)  $ generators
$\mathbf{\sigma}_{i}\equiv\mathbf{\sigma}_{i}^{\left[  k\right]  }$ (and the
identity $\mathbf{e}$) satisfying the following relations

$\bullet$ $\left(  k-1\right)  $ higher braid relations%
\begin{align}
&  \overset{k}{\overbrace{\mathbf{\sigma}_{i}\mathbf{\sigma}_{i+1}%
\ldots\mathbf{\sigma}_{i+k-3}\mathbf{\sigma}_{i+k-2}\mathbf{\sigma}_{i}}%
}\label{ss1}\\
&  =\mathbf{\sigma}_{i+1}\mathbf{\sigma}_{i+2}\ldots\mathbf{\sigma}%
_{i+k-2}\mathbf{\sigma}_{i}\mathbf{\sigma}_{i+1}\label{ss2}\\
&  \vdots\nonumber\\
&  =\mathbf{\sigma}_{i+k-2}\mathbf{\sigma}_{i}\mathbf{\sigma}_{i+1}%
\mathbf{\sigma}_{i+2}\ldots\mathbf{\sigma}_{i}\mathbf{\sigma}_{i+1}%
\mathbf{\sigma}_{i+k-2}\equiv q_{i}^{\left[  k\right]  },\ \ \ \ q_{i}%
^{\left[  k\right]  }\in\mathcal{B}_{n}\left[  k\right]  ,\label{ss3}\\
\ \ i  &  \in\mathfrak{I}_{braid}=\left\{  1,\ldots,n-k+1\right\}  ,
\label{ibr}%
\end{align}

$\bullet$ $\left(  k-1\right)  $-ary far commutativity%
\begin{align}
&  \overset{k-1}{\overbrace{\mathbf{\sigma}_{i_{1}}\mathbf{\sigma}_{i_{2}%
}\ldots\mathbf{\sigma}_{i_{k-3}}\mathbf{\sigma}_{i_{k-2}}\mathbf{\sigma
}_{i_{k-1}}}}\label{kc1}\\
&  \vdots\nonumber\\
&  =\mathbf{\sigma}_{\tau\left(  i_{1}\right)  }\mathbf{\sigma}_{\tau\left(
i_{2}\right)  }\ldots\mathbf{\sigma}_{\tau\left(  i_{k-3}\right)
}\mathbf{\sigma}_{\tau\left(  i_{k-2}\right)  }\mathbf{\sigma}_{\tau\left(
i_{k-1}\right)  },\label{kc2}\\
\text{if all }\left\vert i_{p}-i_{s}\right\vert  &  \geq
k-1,\ \ \ \ p,s=1,\ldots,k-1,\label{ip}\\
\ \ \mathfrak{I}_{far}  &  =\left\{  n-k,\ldots,n-1\right\}  , \label{ifar}%
\end{align}
where $\tau$ is an element of the permutation symmetry group $\tau\in S_{k-1}$.
\end{definition}

A general element of the higher $k$-degree braid group $\mathcal{B}_{n}\left[
k\right]  $ is a word of the form%
\begin{equation}
\mathfrak{w}=\mathbf{\sigma}_{i_{1}}^{p_{1}}\ldots\mathbf{\sigma}_{i_{r}%
}^{p_{r}}\ldots\mathbf{\sigma}_{i_{m}}^{p_{m}},\ \ \ i_{m}=1,\ldots,n,
\label{wk}%
\end{equation}
where $p_{r}\in\mathbb{Z}$ are (positive or negative) powers of the generators
$\sigma_{i_{r}}$, $r=1,\ldots,m$ and $m\in\mathbb{N}$.

\begin{remark}
The ternary case $k=3$ coincides with the Artin braid group $\mathcal{B}%
_{n}^{\left[  3\right]  }=B_{n}$ (\ref{s1})--(\ref{s2}).
\end{remark}

\begin{remark}
The representation of the higher $k$-degree braid relations in $\mathcal{B}%
_{n}\left[  k\right]  $ in the tensor product of vector spaces (similarly to
$B_{n}$ and the Yang-Baxter equation \cite{tur88}) can be obtained using the
$n^{\prime}$-ary braid equation introduced in \cite{dup2018d}
(\textbf{Proposition 7.2} and next there).
\end{remark}

\begin{definition}
We say that the $k$-ary matrix group $\mathcal{M}_{k}^{gen\text{-}\Sigma}$
generated by the matrix generators $\Sigma_{i_{1},i_{2},\ldots,i_{k-1}}\left(
k-1\right)  $ satisfying the relations (\ref{sk1})--(\ref{se}) is in
\textit{polyadic-binary correspondence} with the higher $k$-degree braid group
$\mathcal{B}_{n}\left[  k\right]  $, which is denoted as (cf. (\ref{c3}))%
\begin{equation}
\mathcal{M}_{k}^{gen\text{-}\Sigma}\Bumpeq\mathcal{B}_{n}\left[  k\right]  .
\label{ckb}%
\end{equation}

\end{definition}

\begin{example}
Let $k=4$, then the $4$-ary matrix group $\mathcal{M}_{4}^{gen\text{-}\Sigma}$
is generated by the matrix generators $\Sigma_{i_{1},i_{2},i_{3}}\left(
3\right)  $ satisfying (\ref{sk1})--(\ref{se})

$\bullet$ $4$-ary relations of $q$-polyadic order (\ref{me})%
\begin{equation}
\left(  \Sigma_{i,i+1,i+2}\left(  3\right)  \right)  ^{4}=\left(
\Sigma_{i+2,i,i+1}\left(  3\right)  \right)  ^{4}=\left(  \Sigma
_{i+1,i+2,i}\left(  3\right)  \right)  ^{4}=q_{i}^{\left[  3\right]  }E\left(
3\right)  ,\ \ 1\leq i\leq n-3, \label{o4}%
\end{equation}
$\bullet$ far commutativity%
\begin{align}
&  \left(  \Sigma_{i_{1},i_{2},i_{3}}\left(  3\right)  \right)  ^{3}E\left(
3\right)  =\left(  \Sigma_{i_{3},i_{1},i_{2}}\left(  3\right)  \right)
^{3}E\left(  3\right)  =\left(  \Sigma_{i_{2},i_{3},i_{1}}\left(  3\right)
\right)  ^{3}E\left(  3\right) \nonumber\\
&  =\left(  \Sigma_{i_{1},i_{3},i_{2}}\left(  3\right)  \right)  ^{3}E\left(
3\right)  =\left(  \Sigma_{i_{3},i_{2},i_{1}}\left(  3\right)  \right)
^{3}E\left(  3\right)  =\left(  \Sigma_{i_{2},i_{1},i_{3}}\left(  3\right)
\right)  ^{3}E\left(  3\right)  ,\label{f4}\\
\left\vert i_{1}-i_{2}\right\vert  &  \geq3,\ \ \left\vert i_{1}%
-i_{3}\right\vert \geq3,\ \ \left\vert i_{2}-i_{3}\right\vert \geq3.\nonumber
\end{align}

Let $\mathbf{\sigma}_{i}\equiv\mathbf{\sigma}_{i}^{\left[  4\right]  }%
\in\mathcal{B}_{n}\left[  4\right]  $, $i=1,\ldots,n-1$, then we use the
$4$-ary $3\times3$ matrix presentation for the generators (cf.
\textit{Example} \ref{ex-smgk4})
\begin{equation}
\Sigma_{i_{1},i_{2},i_{3}}\left(  3\right)  \equiv\Sigma^{\left(
3\times3\right)  }\left(  \mathbf{\sigma}_{i_{1}},\mathbf{\sigma}_{i_{2}%
},\mathbf{\sigma}_{i_{3}}\right)  =\left(
\begin{array}
[c]{ccc}%
0 & \mathbf{\sigma}_{i_{1}} & 0\\
0 & 0 & \mathbf{\sigma}_{i_{2}}\\
\mathbf{\sigma}_{i_{3}} & 0 & 0
\end{array}
\right)  ,\ \ \ i_{1},i_{2},i_{3}=1,\ldots,n-1.
\end{equation}
The querelement $\bar{\Sigma}_{i_{1},i_{2},i_{3}}\left(  3\right)  $
satisfying%
\begin{equation}
\left(  \Sigma_{i_{1},i_{2},i_{3}}\left(  3\right)  \right)  ^{3}\bar{\Sigma
}_{i_{1},i_{2},i_{3}}\left(  3\right)  =\Sigma_{i_{1},i_{2},i_{3}}\left(
3\right)  ,
\end{equation}
has the form%
\begin{equation}
\bar{\Sigma}_{i_{1},i_{2},i_{3}}\left(  3\right)  =\left(
\begin{array}
[c]{ccc}%
0 & \mathbf{\sigma}_{i_{3}}^{-1}\mathbf{\sigma}_{i_{2}}^{-1} & 0\\
0 & 0 & \mathbf{\sigma}_{i_{1}}^{-1}\mathbf{\sigma}_{i_{3}}^{-1}\\
\mathbf{\sigma}_{i_{2}}^{-1}\mathbf{\sigma}_{i_{1}}^{-1} & 0 & 0
\end{array}
\right)  .
\end{equation}

Expanding (\ref{o4})--(\ref{f4}) in components, we obtain the relations for
the higher $4$-degree braid group $\mathcal{B}_{n}\left[  4\right]  $ as follows

$\bullet$ higher $4$-degree braid relations%
\begin{equation}
\mathbf{\sigma}_{i}\mathbf{\sigma}_{i+1}\mathbf{\sigma}_{i+2}\mathbf{\sigma
}_{i}=\mathbf{\sigma}_{i+1}\mathbf{\sigma}_{i+2}\mathbf{\sigma}_{i}%
\mathbf{\sigma}_{i+1}=\mathbf{\sigma}_{i+2}\mathbf{\sigma}_{i}\mathbf{\sigma
}_{i+1}\mathbf{\sigma}_{i+2}\equiv q_{i}^{\left[  4\right]  },\ \ 1\leq i\leq
n-3, \label{s4}%
\end{equation}

$\bullet$ ternary far (total) commutativity%
\begin{align}
\mathbf{\sigma}_{i_{1}}\mathbf{\sigma}_{i_{2}}\mathbf{\sigma}_{i_{3}} &
=\mathbf{\sigma}_{i_{2}}\mathbf{\sigma}_{i_{3}}\mathbf{\sigma}_{i_{1}%
}=\mathbf{\sigma}_{i_{3}}\mathbf{\sigma}_{i_{1}}\mathbf{\sigma}_{i_{2}%
}=\mathbf{\sigma}_{i_{1}}\mathbf{\sigma}_{i_{3}}\mathbf{\sigma}_{i_{2}%
}=\mathbf{\sigma}_{i_{2}}\mathbf{\sigma}_{i_{1}}\mathbf{\sigma}_{i_{3}%
}=\mathbf{\sigma}_{i_{3}}\mathbf{\sigma}_{i_{2}}\mathbf{\sigma}_{i_{1}%
},\label{sss}\\
\left\vert i_{1}-i_{2}\right\vert  &  \geq3,\ \ \left\vert i_{1}%
-i_{3}\right\vert \geq3,\ \ \left\vert i_{2}-i_{3}\right\vert \geq
3.\label{i12}%
\end{align}

\end{example}

In the higher $4$-degree braid group the minimum number of generators is $4$,
which follows from (\ref{s4}). In this case we have a braid relation for $i=1$
only and no far commutativity relations, because of (\ref{i12}). Then

\begin{example}
The higher $4$-degree braid group $\mathcal{B}_{4}\left[  4\right]  $ is
generated by $3$ generators $\mathbf{\sigma}_{1}$, $\mathbf{\sigma}_{2}$,
$\mathbf{\sigma}_{3}$, which satisfy only the braid relation%
\begin{equation}
\mathbf{\sigma}_{1}\mathbf{\sigma}_{2}\mathbf{\sigma}_{3}\mathbf{\sigma}%
_{1}=\mathbf{\sigma}_{2}\mathbf{\sigma}_{3}\mathbf{\sigma}_{1}\mathbf{\sigma
}_{2}=\mathbf{\sigma}_{3}\mathbf{\sigma}_{1}\mathbf{\sigma}_{2}\mathbf{\sigma
}_{3}.\label{b44}%
\end{equation}

\end{example}

If $n\leq7$, then there will no far commutativity relations at all, which
follows from (\ref{i12}), and so the first higher $4$-degree braid group
containing far commutativity should have $n=8$ elements.

\begin{example}
\label{ex-b48}The higher $4$-degree braid group $\mathcal{B}_{8}\left[
4\right]  $ is generated by $7$ generators $\mathbf{\sigma}_{1},\ldots
,\mathbf{\sigma}_{7}$, which satisfy the braid relations with $i=1,\ldots,5$%
\begin{align}
\mathbf{\sigma}_{1}\mathbf{\sigma}_{2}\mathbf{\sigma}_{3}\mathbf{\sigma}_{1}
&  =\mathbf{\sigma}_{2}\mathbf{\sigma}_{3}\mathbf{\sigma}_{1}\mathbf{\sigma
}_{2}=\mathbf{\sigma}_{3}\mathbf{\sigma}_{1}\mathbf{\sigma}_{2}\mathbf{\sigma
}_{3},\\
\mathbf{\sigma}_{2}\mathbf{\sigma}_{3}\mathbf{\sigma}_{4}\mathbf{\sigma}_{2}
&  =\mathbf{\sigma}_{3}\mathbf{\sigma}_{4}\mathbf{\sigma}_{2}\mathbf{\sigma
}_{3}=\mathbf{\sigma}_{4}\mathbf{\sigma}_{2}\mathbf{\sigma}_{3}\mathbf{\sigma
}_{4},\\
\mathbf{\sigma}_{3}\mathbf{\sigma}_{4}\mathbf{\sigma}_{5}\mathbf{\sigma}_{3}
&  =\mathbf{\sigma}_{4}\mathbf{\sigma}_{5}\mathbf{\sigma}_{3}\mathbf{\sigma
}_{4}=\mathbf{\sigma}_{5}\mathbf{\sigma}_{3}\mathbf{\sigma}_{4}\mathbf{\sigma
}_{5},\\
\mathbf{\sigma}_{4}\mathbf{\sigma}_{5}\mathbf{\sigma}_{6}\mathbf{\sigma}_{4}
&  =\mathbf{\sigma}_{5}\mathbf{\sigma}_{6}\mathbf{\sigma}_{4}\mathbf{\sigma
}_{5}=\mathbf{\sigma}_{6}\mathbf{\sigma}_{4}\mathbf{\sigma}_{5}\mathbf{\sigma
}_{6},\\
\mathbf{\sigma}_{5}\mathbf{\sigma}_{6}\mathbf{\sigma}_{7}\mathbf{\sigma}_{5}
&  =\mathbf{\sigma}_{6}\mathbf{\sigma}_{7}\mathbf{\sigma}_{5}\mathbf{\sigma
}_{6}=\mathbf{\sigma}_{7}\mathbf{\sigma}_{5}\mathbf{\sigma}_{6}\mathbf{\sigma
}_{7},
\end{align}
together with the ternary far commutativity relation%
\begin{equation}
\mathbf{\sigma}_{1}\mathbf{\sigma}_{4}\mathbf{\sigma}_{7}=\mathbf{\sigma}%
_{4}\mathbf{\sigma}_{7}\mathbf{\sigma}_{1}=\mathbf{\sigma}_{7}\mathbf{\sigma
}_{1}\mathbf{\sigma}_{4}=\mathbf{\sigma}_{1}\mathbf{\sigma}_{7}\mathbf{\sigma
}_{4}=\mathbf{\sigma}_{4}\mathbf{\sigma}_{1}\mathbf{\sigma}_{7}=\mathbf{\sigma
}_{7}\mathbf{\sigma}_{4}\mathbf{\sigma}_{1}.
\end{equation}

\end{example}

\begin{remark}
In polyadic group theory there are several possible modifications of the
commutativity property, but nevertheless we assume here the \textit{total
commutativity} relations in the $k$-ary matrix generators and the
corresponding far commutativity relations in the higher degree braid groups.
\end{remark}

If $\mathcal{B}_{n}\left[  k\right]  \rightarrow\mathbb{Z}$ is the
abelianization defined by $\mathbf{\sigma}_{i}^{\pm}\rightarrow\pm1$, then
$\mathbf{\sigma}_{i}^{p}=\mathbf{e}$, if and only if $p=0$, and
$\mathbf{\sigma}_{i}$ are of infinite order. Moreover, we can prove (as in the
ordinary case $k=3$ \cite{dye80})

\begin{theorem}
The higher $k$-degree braid group $\mathcal{B}_{n}\left[  k\right]  $ is torsion-free.
\end{theorem}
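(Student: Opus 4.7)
The plan is to mimic the algebraic argument of Dyer \cite{dye80} for the classical case $k=3$. The remark immediately preceding the theorem already establishes, via the abelianisation $\mathcal{B}_{n}[k]\to\mathbb{Z}$, that each generator $\mathbf{\sigma}_{i}$ has infinite order; hence it suffices to rule out torsion among words $\mathfrak{w}$ as in (\ref{wk}) whose total exponent sum $\sum p_{r}$ vanishes. The strategy is to exploit a short exact sequence
\begin{equation*}
1\longrightarrow\mathcal{P}_{n}[k]\longrightarrow\mathcal{B}_{n}[k]\overset{\pi}{\longrightarrow}\mathcal{S}_{n}[k]\longrightarrow1,
\end{equation*}
where $\mathcal{S}_{n}[k]$ is the higher $k$-degree symmetric group (defined later in the paper) and $\mathcal{P}_{n}[k]:=\ker\pi$ is the corresponding ``pure'' higher braid group, thereby splitting torsion-freeness into two claims: (i) $\mathcal{P}_{n}[k]$ is torsion-free; (ii) no nontrivial finite cyclic subgroup of $\mathcal{S}_{n}[k]$ admits a lift to $\mathcal{B}_{n}[k]$.

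For (i) I would proceed by induction on $n$, aiming to realise $\mathcal{P}_{n}[k]$ as an iterated semidirect product
\begin{equation*}
\mathcal{P}_{n}[k]\;\cong\;\mathcal{P}_{n-1}[k]\ltimes F_{n}[k]
\end{equation*}
with each factor $F_{n}[k]$ torsion-free (ideally free). The iteration is driven by a ``forget the last generator'' homomorphism $\mathcal{P}_{n}[k]\to\mathcal{P}_{n-1}[k]$ sending $\mathbf{\sigma}_{n-1}\mapsto\mathbf{e}$; its kernel is normally generated by the $\mathcal{B}_{n}[k]$-conjugates of $\mathbf{\sigma}_{n-1}$ and can be analysed by Reidemeister--Schreier rewriting against the higher braid relations (\ref{ss1})--(\ref{ss3}) together with the $(k-1)$-ary far commutativity (\ref{kc1})--(\ref{ip}).

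For (ii), following Dyer, any $\mathfrak{w}\in\mathcal{B}_{n}[k]$ with $\mathfrak{w}^{p}=\mathbf{e}$ would project to an element of $\mathcal{S}_{n}[k]$ of order $d$ dividing $p$, so $\mathfrak{w}^{d}\in\mathcal{P}_{n}[k]$; by (i) this forces $\mathfrak{w}^{d}=\mathbf{e}$, and therefore the finite cyclic group $\langle\mathfrak{w}\rangle$ would inject into $\mathcal{S}_{n}[k]$. One must then rule such a lift out; the classical argument does this using fixed-point and centraliser information about powers of the fundamental element $\Delta_{n}^{2}$, and one should expect the elements $q_{i}^{[k]}$ from (\ref{ss3}) --- or a suitable product of them --- to play the same role in $\mathcal{B}_{n}[k]$.

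The main obstacle is step (i). In the classical case $k=3$ the iterated decomposition $P_{n}\cong F_{n-1}\ltimes\cdots\ltimes F_{1}$ is a topological consequence of the Fadell--Neuwirth fibration of configuration spaces, and no such geometric model is presently available for $\mathcal{B}_{n}[k]$ when $k\geq 4$. The splittings must therefore be constructed combinatorially, directly from the presentation (\ref{ss1})--(\ref{ip}), presumably by building a Garside-type normal form in which the $k$-term products $\mathbf{\sigma}_{i}\mathbf{\sigma}_{i+1}\cdots\mathbf{\sigma}_{i+k-2}\mathbf{\sigma}_{i}=q_{i}^{[k]}$ serve as elementary divisors --- and this is where genuine new work beyond Dyer's template is required.
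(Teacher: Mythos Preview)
The paper does not actually prove this theorem: after the sentence on abelianisation (which only shows that each individual generator $\mathbf{\sigma}_{i}$ has infinite order), the paper merely asserts ``we can prove (as in the ordinary case $k=3$ \cite{dye80})'' and states the theorem with no further argument. So there is nothing to compare your proposal against --- you have in fact attempted considerably more than the paper does.

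Your outline is a faithful reconstruction of what a Dyer-style proof would have to look like, and you are right to flag step~(i) as the crux: the iterated semidirect product decomposition of $P_{n}$ in the classical case rests on the Fadell--Neuwirth fibration, and no analogue is available for $\mathcal{B}_{n}[k]$ when $k\geq4$. I would add two further cautions. First, the quotient $\mathcal{S}_{n}[k]$ is not known to be finite for $k\geq4$ (in the classical case finiteness of $S_{n}$ is used implicitly throughout Dyer's argument, e.g.\ to conclude that $\mathcal{P}_{n}[k]$ has finite index and that the passage from $\mathfrak{w}$ to $\mathfrak{w}^{d}$ terminates). Second, the ``forget the last generator'' map $\mathbf{\sigma}_{n-1}\mapsto\mathbf{e}$ need not be well defined on $\mathcal{B}_{n}[k]$: the higher braid relation $\mathbf{\sigma}_{n-k+1}\cdots\mathbf{\sigma}_{n-1}\mathbf{\sigma}_{n-k+1}=\mathbf{\sigma}_{n-k+2}\cdots\mathbf{\sigma}_{n-1}\mathbf{\sigma}_{n-k+1}\mathbf{\sigma}_{n-k+2}$ involves $\mathbf{\sigma}_{n-1}$ on both sides in different positions, and setting $\mathbf{\sigma}_{n-1}=\mathbf{e}$ does not obviously collapse it to a relation of $\mathcal{B}_{n-1}[k]$. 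In short, the theorem as stated in the paper is a claim rather than a proved result, and your honest identification of the missing ingredients is the correct assessment of the situation.
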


Recall (see, e.g. \cite{kas/tur2008}) that there exists a surjective
homomorphism of the braid group onto the finite symmetry group $B_{n}%
\rightarrow S_{n}$ by $\sigma_{i}\rightarrow s_{i}=\left(  i,i+1\right)  \in
S_{n}$. The generators $s_{i}$ satisfy (\ref{s1})--(\ref{s2}) together with
the finite order demand%
\begin{align}
s_{i}s_{i+1}s_{i} &  =s_{i+1}s_{i}s_{i+1},\ \ \ 1\leq i\leq n-2,\label{sc1}\\
s_{i}s_{j} &  =s_{j}s_{i},\ \ \ \ \left\vert i-j\right\vert \geq
2,\label{sc2}\\
s_{i}^{2} &  =e,\ \ \ \ \ i=1,\ldots,n-1,\label{sc3}%
\end{align}
which is called the \textit{Coxeter presentation} of the symmetry group
$S_{n}$. Indeed, multiplying both sides of (\ref{sc1}) from the right
successively by $s_{i+1}$, $s_{i}$, and $s_{i+1}$, using (\ref{sc3}), we
obtain $\left(  s_{i}s_{i+1}\right)  ^{3}=1$, and (\ref{sc2}) on $s_{i}$ and
$s_{j}$, we get $\left(  s_{i}s_{j}\right)  ^{2}=1$. Therefore, a Coxeter
group \cite{bri/sai} corresponding (\ref{sc1})--(\ref{sc3}) is presented by
the same generators $s_{i}$ and the relations%
\begin{align}
\left(  s_{i}s_{i+1}\right)  ^{3} &  =1,\ \ \ 1\leq i\leq n-2,\label{scc1}\\
\left(  s_{i}s_{j}\right)  ^{2} &  =1,\ \ \ \ \left\vert i-j\right\vert
\geq2,\label{scc2}\\
s_{i}^{2} &  =e,\ \ \ \ \ i=1,\ldots,n-1.\label{scc3}%
\end{align}

A general Coxeter group $W_{n}=W_{n}\left(  e,r_{i}\right)  $ is presented by
$n$ generators $r_{i}$ and the relations \cite{bjo/bre}%
\begin{equation}
\left(  r_{i}r_{j}\right)  ^{m_{ij}}=e,\ \ \ \ m_{ij}=\left\{
\begin{array}
[c]{c}%
1,\ i=j,\\
\geq2,\ \ i\neq j.
\end{array}
\right.  \label{rr}%
\end{equation}

By analogy with (\ref{sc1})--(\ref{sc3}), we make the following

\begin{definition}
A higher analog of $S_{n}$, the $k$\textit{-degree symmetry group}
$\mathcal{S}_{n}\left[  k\right]  =\mathcal{S}_{n}^{\left[  k\right]  }\left(
\mathbf{e},\mathbf{s}_{i}\right)  $, is presented by generators $\mathbf{s}%
_{i}$, $i=1,\ldots,n-1$ satisfying (\ref{ss1})--(\ref{kc2}) together with the
additional condition of finite $\left(  k-1\right)  $-order $\mathbf{s}%
_{i}^{\left(  k-1\right)  }=\mathbf{e}$, $i=1,\ldots,n$.
\end{definition}

\begin{example}
The lowest higher degree case is $\mathcal{S}_{4}\left[  4\right]  $ which is
presented by three generators $\mathbf{s}_{1}$, $\mathbf{s}_{2}$,
$\mathbf{s}_{3}$ satisfying (see (\ref{b44}))%
\begin{align}
\mathbf{s}_{1}\mathbf{s}_{2}\mathbf{s}_{3}\mathbf{s}_{1} &  =\mathbf{s}%
_{2}\mathbf{s}_{3}\mathbf{s}_{1}\mathbf{s}_{2}=\mathbf{s}_{3}\mathbf{s}%
_{1}\mathbf{s}_{2}\mathbf{s}_{3},\\
\mathbf{s}_{1}^{3} &  =\mathbf{s}_{2}^{3}=\mathbf{s}_{3}^{3}=\mathbf{e}.
\end{align}

\end{example}

In a similar way we define a higher degree analog of the Coxeter group
(\ref{rr}).

\begin{definition}
A higher $k$\textit{-degree Coxeter group} $\mathcal{W}_{n}\left[  k\right]
=\mathcal{W}_{n}^{\left[  k\right]  }\left(  \mathbf{e},\mathbf{r}_{i}\right)
$ is presented by $n$ generators $\mathbf{r}_{i}$ obeying the relations%
\begin{align}
&  \left(  \mathbf{r}_{i_{1}},\mathbf{r}_{i_{2}},\ldots,\mathbf{r}_{i_{k-1}%
}\right)  ^{\mathbf{m}_{_{i_{1}i_{2}},\ldots,_{i_{k-1}}}}=\mathbf{e,}%
\label{re1}\\
&  \mathbf{m}_{_{i_{1}i_{2}},\ldots,_{i_{k-1}}}=\left\{
\begin{array}
[c]{c}%
1,\ \ \ i_{1}=i_{2}=\ldots=i_{k-1},\\
\geq k-1,\ \left\vert i_{p}-i_{s}\right\vert \geq k-1,\ \ \ \ p,s=1,\ldots
,k-1.
\end{array}
\right.  \label{re2}%
\end{align}

\end{definition}

It follows from (\ref{re2}) that all generators are of $\left(  k-1\right)  $
order $\mathbf{r}_{i}^{k-1}=\mathbf{e}$. A \textit{higher }$k$\textit{-degree
Coxeter matrix} is a hypermatrix $M_{n,Cox}^{\left[  k-1\right]  }\left(
\overset{k-1}{\overbrace{n\times n\times\ldots\times n}}\right)  $ having $1$
on the main diagonal and other entries $\mathbf{m}_{_{i_{1}i_{2}}%
,\ldots,_{i_{k-1}}}$.

\begin{example}
In the lowest higher degree case $k=4$ and all $\mathbf{m}_{_{i_{1}i_{2}%
},\ldots,_{i_{k-1}}}=3$ we have (instead of commutativity in the ordinary case
$k=3$)%
\begin{align}
\left(  \mathbf{r}_{i}\mathbf{r}_{j}\right)  ^{2} &  =\mathbf{r}_{j}%
^{2}\mathbf{r}_{i}^{2},\\
\mathbf{r}_{i}\mathbf{r}_{j}\mathbf{r}_{i} &  =\mathbf{r}_{j}^{2}%
\mathbf{r}_{i}^{2}\mathbf{r}_{j}^{2}.
\end{align}

\end{example}

\begin{example}
A higher $4$-degree analog of (\ref{scc1})--(\ref{scc3}) is given by%
\begin{align}
\left(  \mathbf{r}_{i}\mathbf{r}_{i+1}\mathbf{r}_{i+2}\right)  ^{4}  &
=1,\ \ \ 1\leq i\leq n-3,\label{r1}\\
\left(  \mathbf{r}_{i_{1}}\mathbf{r}_{i_{2}}\mathbf{r}_{i_{3}}\right)  ^{3}
&  =1,\ \ \left\vert i_{1}-i_{2}\right\vert \geq3,\ \ \left\vert i_{1}%
-i_{3}\right\vert \geq3,\ \ \left\vert i_{2}-i_{3}\right\vert \geq
3,\label{r2}\\
\mathbf{r}_{i}^{3}  &  =\mathbf{e},\ \ \ \ \ i=1,\ldots,n-1. \label{r3}%
\end{align}

It follows from (\ref{r2}), that%
\begin{equation}
\left(  \mathbf{r}_{i_{1}}\mathbf{r}_{i_{2}}\mathbf{r}_{i_{3}}\right)
^{2}=\mathbf{r}_{i_{3}}^{2}\mathbf{r}_{i_{2}}^{2}\mathbf{r}_{i_{1}}^{2},
\end{equation}
which cannot be reduced to total commutativity (\ref{sss}). From the first
relation (\ref{r1}) we obtain%
\begin{equation}
\mathbf{r}_{i}\mathbf{r}_{i+1}\mathbf{r}_{i+2}\mathbf{r}_{i}=\mathbf{r}%
_{i+2}^{2}\mathbf{r}_{i+1}^{2},
\end{equation}
which differs from the higher $4$-degree braid relations (\ref{s4}).
\end{example}

\begin{example}
In the simplest case the higher $4$-degree Coxeter group $\mathcal{W}%
_{4}\left[  4\right]  $ has $3$ generator $\mathbf{r}_{1}$, $\mathbf{r}_{2}$,
$\mathbf{r}_{3}$ satisfying%
\begin{equation}
\left(  \mathbf{r}_{1}\mathbf{r}_{2}\mathbf{r}_{3}\right)  ^{4}=\mathbf{r}%
_{1}^{3}=\mathbf{r}_{2}^{3}=\mathbf{r}_{3}^{3}=\mathbf{e}.
\end{equation}

\end{example}

\begin{example}
The minimal case, when the conditions (\ref{r2}) appear is $\mathcal{W}%
_{8}\left[  4\right]  $%
\begin{align}
\mathbf{r}_{1}\mathbf{r}_{2}\mathbf{r}_{3}\mathbf{r}_{1}  &  =\mathbf{r}%
_{3}^{2}\mathbf{r}_{2}^{2},\\
\mathbf{r}_{2}\mathbf{r}_{3}\mathbf{r}_{4}\mathbf{r}_{2}  &  =\mathbf{r}%
_{4}^{2}\mathbf{r}_{3}^{2},\\
\mathbf{r}_{3}\mathbf{r}_{4}\mathbf{r}_{5}\mathbf{r}_{3}  &  =\mathbf{r}%
_{5}^{2}\mathbf{r}_{4}^{2},\\
\mathbf{r}_{4}\mathbf{r}_{5}\mathbf{r}_{6}\mathbf{r}_{4}  &  =\mathbf{r}%
_{6}^{2}\mathbf{r}_{5}^{2},\\
\mathbf{r}_{5}\mathbf{r}_{6}\mathbf{r}_{7}\mathbf{r}_{5}  &  =\mathbf{r}%
_{7}^{2}\mathbf{r}_{6}^{2},
\end{align}
and an analog of commutativity%
\begin{equation}
\left(  \mathbf{r}_{1}\mathbf{r}_{4}\mathbf{r}_{7}\right)  ^{2}=\mathbf{r}%
_{7}^{2}\mathbf{r}_{4}^{2}\mathbf{r}_{1}^{2}.
\end{equation}

\end{example}

Thus, we arrive at

\begin{theorem}
The higher $k$-degree Coxeter group can present the $k$-degree symmetry group in
the lowest case only, if and only if $k=3$.
\end{theorem}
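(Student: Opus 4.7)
The plan is to prove the biconditional by separating the two directions and distinguishing the groups via their abelianizations. For sufficiency ($k=3$), the argument is the classical Coxeter--braid equivalence already sketched immediately before the statement: right-multiplying $(r_ir_{i+1})^3 = e$ by $(r_{i+1}r_ir_{i+1})^{-1}$ and cancelling via $r_j^2 = e$ produces the braid relation $r_ir_{i+1}r_i = r_{i+1}r_ir_{i+1}$; likewise $(r_ir_j)^2 = e$ together with $r_i^2 = r_j^2 = e$ gives the commutativity $r_ir_j = r_jr_i$; combined with $r_i^2 = e$ these are precisely the defining relations of $\mathcal{S}_n[3] = S_n$, and the reverse derivation is immediate from the same manipulations read in the opposite order.

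For necessity ($k\geq 4$), the strategy is to compute both abelianizations and show they are non-isomorphic. In $\mathcal{S}_n[k]^{\mathrm{ab}}$, abelianizing the higher braid identity $\mathbf{s}_i\mathbf{s}_{i+1}\ldots\mathbf{s}_{i+k-2}\mathbf{s}_i = \mathbf{s}_{i+1}\mathbf{s}_{i+2}\ldots\mathbf{s}_i\mathbf{s}_{i+1}$ and comparing the exponent of each generator on the two sides forces $\mathbf{s}_i = \mathbf{s}_{i+1}$; iterating over admissible $i$ and combining with $\mathbf{s}_i^{k-1} = \mathbf{e}$ collapses all generators to a single class of order $k-1$, so $\mathcal{S}_n[k]^{\mathrm{ab}} \cong \mathbb{Z}/(k-1)\mathbb{Z}$. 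In $\mathcal{W}_n[k]^{\mathrm{ab}}$, written additively, each defining relation $(\mathbf{r}_{i_1}\ldots\mathbf{r}_{i_{k-1}})^{\mathbf{m}_{i_1\ldots i_{k-1}}} = \mathbf{e}$ becomes $m'(\mathbf{r}_{i_1}+\ldots+\mathbf{r}_{i_{k-1}}) = 0$ with $m' \equiv \mathbf{m}_{i_1\ldots i_{k-1}} \pmod{k-1}$ and with indices of pairwise distance $\geq k-1$. No such relation identifies two distinct generators, and (as argued below) no $\mathbb{Z}$-combination of them does either, so the abelianization retains at least two independent $\mathbb{Z}/(k-1)\mathbb{Z}$-summands. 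Hence $\mathcal{W}_n[k]^{\mathrm{ab}} \not\cong \mathcal{S}_n[k]^{\mathrm{ab}}$ for $k\geq 4$, and the two groups cannot be isomorphic.

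The main obstacle is justifying that no $\mathbb{Z}$-combination of the abelianized Coxeter relations yields an equation of the form $\mathbf{r}_i = \mathbf{r}_j$ modulo $k-1$. The key point is a support argument: each abelianized relation is supported on an index set of size exactly $k-1$ with a uniform coefficient $m'$, whereas the target $\mathbf{r}_i - \mathbf{r}_j = 0$ corresponds to a vector of support size $2$ with coefficients $+1,-1$, and for $k\geq 4$ no $\mathbb{Z}/(k-1)$-linear combination of $(k-1)$-sparse uniform-coefficient vectors can produce such a $2$-sparse opposite-sign vector. A concrete alternative, generalizing the $k=4$ calculation in the text, is to extract from $(\mathbf{r}_i\mathbf{r}_{i+1}\ldots\mathbf{r}_{i+k-2})^k = \mathbf{e}$ the identity $\mathbf{r}_i\mathbf{r}_{i+1}\ldots\mathbf{r}_{i+k-2}\mathbf{r}_i = \mathbf{r}_{i+k-2}^{k-2}\ldots\mathbf{r}_{i+1}^{k-2}$ using $\mathbf{r}_j^{-1} = \mathbf{r}_j^{k-2}$, and to observe that the right-hand side contains no $\mathbf{r}_i$ factor, whereas the higher braid right-hand side $\mathbf{s}_{i+1}\ldots\mathbf{s}_{i+k-2}\mathbf{s}_i\mathbf{s}_{i+1}$ retains $\mathbf{s}_i$ to first power---a discrepancy that persists for every $k-2\geq 2$ and rules out any identification of the two presentations.
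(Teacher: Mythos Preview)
Your sufficiency argument for $k=3$ matches the paper's: both rely on the classical manipulation of $(s_is_{i+1})^3=e$ and $s_i^2=e$ to recover the braid and commutativity relations, exactly as the paper sketches just before the theorem.

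For necessity your route is genuinely different. The paper offers no formal proof at all: it simply computes, in the $k=4$ examples, that $(\mathbf{r}_i\mathbf{r}_{i+1}\mathbf{r}_{i+2})^4=\mathbf{e}$ together with $\mathbf{r}_j^3=\mathbf{e}$ yields $\mathbf{r}_i\mathbf{r}_{i+1}\mathbf{r}_{i+2}\mathbf{r}_i=\mathbf{r}_{i+2}^2\mathbf{r}_{i+1}^2$, observes that this ``differs from'' the higher braid relation $\mathbf{s}_i\mathbf{s}_{i+1}\mathbf{s}_{i+2}\mathbf{s}_i=\mathbf{s}_{i+1}\mathbf{s}_{i+2}\mathbf{s}_i\mathbf{s}_{i+1}$, and then asserts the theorem. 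Your abelianization argument is a substantial upgrade: it turns an observation about the shape of relations into an actual group-theoretic invariant that distinguishes the two presentations. In the lowest case $n=k$ your computation gives $\mathcal{S}_k[k]^{\mathrm{ab}}\cong\mathbb{Z}/(k-1)$ (the braid relations force all $\mathbf{s}_i$ equal) versus $\mathcal{W}_k[k]^{\mathrm{ab}}\cong(\mathbb{Z}/(k-1))^{k-2}$ (only the single relation $\sum\mathbf{r}_i=0$ survives), and for $k\ge 4$ these are non-isomorphic since $k-2\ge 2$.

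One comment: your general ``support argument'' about $(k-1)$-sparse uniform-coefficient vectors is more machinery than the lowest case needs and, as stated, is not obviously true in full generality. In the lowest case $n=k$ there is exactly one non-diagonal Coxeter relation, and it involves \emph{all} $k-1$ generators with the same coefficient; abelianizing therefore imposes only $\sum_i\mathbf{r}_i=0$ in $(\mathbb{Z}/(k-1))^{k-1}$, and the quotient is visibly $(\mathbb{Z}/(k-1))^{k-2}$. That direct count is cleaner and airtight. Your final paragraph, rewriting the Coxeter consequence as $\mathbf{r}_i\mathbf{r}_{i+1}\cdots\mathbf{r}_{i+k-2}\mathbf{r}_i=\mathbf{r}_{i+k-2}^{k-2}\cdots\mathbf{r}_{i+1}^{k-2}$ and noting the missing $\mathbf{r}_i$ on the right, is essentially the paper's own heuristic; it is suggestive but, unlike the abelianization, does not by itself prove non-isomorphism.
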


%\bigskip

As a further development, it would be interesting to consider the higher degree
(in our sense) groups constructed here from a geometric viewpoint (e.g.,
\cite{birman,kauffman}).

%\smallskip
\textbf{Acknowledgement}. The author is grateful to Mike Hewitt, Thomas Nordahl, Vladimir Tkach and Raimund Vogl for the numerous fruitful discussions and valuable support.

%\newpage
%\pagestyle{fancyref}
\pagestyle{emptyf}
%\mbox{}
%\vskip 0.5cm

\end{document}